\newif\ifdviwin
\newif\ifdviwin
\def\cR{\mathcal{R}}
\def\H{\mathcal{H}}
\def\H{\H}
\def\m2r{\mathbb{M}^2\times\mathbb{R}}
\def\h2r{\mathbb{H}^2\times\mathbb{R}}
\let\infty=\infty \let\0=\emptyset
\let\tilde=\widetilde
\let\alfa=\alpha
\let\parc=\partial
\let\varepsilon=\varepsilon
\def\cte.{\mathop{\rm cte.}\nolimits}
\def\N{\mathbb{N}}
\def\R{\mathbb{R}}
\def\M{\mathbb{M}}
\def\H{\mathcal{H}}
\def\cW{\mathcal{W}}
\def\S{\mathbb{S}}
\def\m2r{\M^2\times\R}
\def\mkr{\M^2(\kappa)\times\R}
\def\h2r{\mathbb{H}^2\times\R}
\def\s2r{\mathbb{S}^2\times\R}
\def\hipar{\H\in \mathfrak{C}^1_{\kappa}}
\def\hi{\H\in C^1([-1,1])}
\def\Hs{\mathcal{H}\text{-}\mathrm{surface}}
\def\Hss{\mathcal{H}\text{-}\mathrm{surfaces}}
\def\Hgg{\mathcal{H}\text{-}\mathrm{graphs}}
\def\sig{\Sigma}
\def\r3{\mathbb{R}^3}
\def\arctanh{\mathrm{arctanh}}
\def\c1{\mathfrak{C}^1_{1,\mathrm{even}}}
\def\cm1{\mathfrak{C}^1_{-1,\mathrm{even}}}
 \newtheorem{defi}{Definition}[section]
 \newtheorem{teo}[defi]{Theorem}
 \newtheorem{pro}[defi]{Proposition}
 \newtheorem{cor}[defi]{Corollary}
 \newtheorem{lem}[defi]{Lemma}
 \newtheorem{remark}[defi]{Remark}
 \newtheorem{obs}[defi]{Observation}
 \newenvironment{proof}{\rm \trivlist \item[\hskip \labelsep{\it
      Proof}:]}{\nopagebreak \hfill $\Box$ \endtrivlist}
\numberwithin{equation}{section}
\begin{document}


\begin{center}

\renewcommand{\thefootnote}{\,}
{\large \bf  A Delaunay-type classification result for prescribed mean curvature surfaces in $\mathbb{M}^2(\kappa)\times\R$
\footnote{\hspace{-.75cm}
\emph{Mathematics Subject Classification:} 53A10.\\
\emph{Keywords}: Prescribed mean curvature, product space, rotational surface, existence of spheres, Delaunay-type classification.\\
The author was partially supported by MICINN-FEDER Grant No. MTM2016-80313-P, Junta de Andalucía Grant No. FQM325 and FPI-MINECO Grant No. BES-2014-067663.}}\\
\vspace{0.5cm} { Antonio Bueno}\\
\end{center}
\vspace{.5cm}
Departamento de Geometría y Topología, Universidad de Granada, E-18071 Granada, Spain. \\ 
\emph{E-mail address:} jabueno@ugr.es \vspace{0.3cm}

\begin{abstract}
The purpose of this paper is to study immersed surfaces in the product spaces $\mkr$, whose mean curvature is given as a $C^1$ function depending on their angle function. This class of surfaces extends widely, among others, the well-known theory of surfaces with constant mean curvature. In this paper we give necessary and sufficient conditions for the existence of prescribed mean curvature spheres, and we describe complete surfaces of revolution proving that they behave as the Delaunay surfaces of CMC type.
\end{abstract}

\section{\large Introduction}
\vspace{-.5cm}
Let $\H$ be a $C^1$ function defined on the 2-sphere of the Euclidean space $\R^3$. An immersed, oriented surface $\sig$ in $\r3$ is said to have \emph{prescribed mean curvature} $\H$ (for short, $\sig$ is an $\H$\emph{-surface)} if its mean curvature function $H_\sig$ satisfies
\begin{equation}\label{Hsupr3}
H_\sig(p)=\H(\eta_p),\hspace{.5cm} \forall p\in\sig,
\end{equation}
where $\eta$ denotes the \emph{Gauss map} of $\sig$.  Obviously, when $\H=H_0$ is chosen as a constant, the surfaces defined by Equation \eqref{Hsupr3} are just the surfaces with constant mean curvature equal to $H_0$. 

The definition of this class of immersed surfaces is motivated by a long standing conjecture due to Alexandrov \cite{Ale} regarding the uniqueness of strictly convex spheres\footnote{By \emph{sphere} we mean a closed (compact and without boundary) surface of genus zero.} with \emph{prescribed Weingarten curvature}, i.e. in Equation \eqref{Hsupr3} the function $\H$ is an arbitrary symmetric function of its principal curvatures and its Gauss map. This conjecture has been recently solved by Gálvez and Mira as a consequence of their outstanding work \cite{GaMi1}, where the authors announced an extremely general Hopf-type theorem\footnote{In the literature, a \emph{Hopf-type theorem} refers to a uniqueness result of immersed spheres in some class of immersed surfaces} for immersed surfaces governed by an elliptic PDE in an arbitrary oriented three-manifold.  For the particular but important case when we prescribe the mean curvature, that is when $\sig$ is governed by Equation \eqref{Hsupr3}, this uniqueness result states the following: \emph{let $\mathcal{S}$ be a strictly convex sphere satisfying \eqref{Hsupr3}. Then, any other immersed sphere governed by \eqref{Hsupr3} is a translation of $\mathcal{S}$}.

Besides the work of Guan and Guan \cite{GuGu}, where they proved the existence of $\H$-spheres under symmetry conditions on $\H$, the class of immersed surfaces in $\r3$ defined by Equation \eqref{Hsupr3} was largely unexplored until we recently developed the global theory of prescribed mean curvature surfaces in $\R^n$, in joint work with Gálvez and Mira \cite{BGM1,BGM2}. In \cite{BGM1} we focused in the global theory of $\H$-surfaces in $\r3$, obtaining a priori curvature and height estimates for $\Hgg$, a structure theorem for properly embedded $\Hss$ with finite topology, stability properties and a radius estimate for stable $\Hss$. In \cite{BGM2} we covered topics such as the analysis of rotational $\H$-hypersurfaces obtaining Delaunay-type classification result, and exhibited examples of a vast variety of $\H$-hypersurfaces for general choices of the prescribed function. 

A particular but important case is when $\H\in C^1(\S^2)$ depends only on the height of the sphere. These functions are called \emph{rotationally symmetric} for obvious reasons, and thus there exists a one dimensional function $\mathfrak{h}\in C^1([-1,1])$ such that $\H(x)=\mathfrak{h}(\langle x,e_3\rangle)$, for every $x\in \S^2$. For this particular case, Equation \eqref{Hsupr3} reads as
\begin{equation}\label{presim}
H_\sig(p)=\mathfrak{h}(\langle\eta_p,e_3\rangle)=\mathfrak{h}(\nu(p)),\hspace{.5cm} \forall p\in\sig,
\end{equation}
where
\begin{equation}\label{defangulo}
\nu:\sig\rightarrow\R,\hspace{.5cm} \nu(p):=\langle\eta_p,e_3\rangle,\hspace{.5cm} \forall p\in\sig,
\end{equation}
is the so called \emph{angle function}.

Our aim in this paper is to extend this recently developed theory of $\Hss$ to the product spaces $\mkr$, where $\M^2(\kappa)$ stands for the complete, simply connected surface of constant curvature $\kappa$. We take as a starting point the natural mixture between the well-studied theory of constant mean curvature surfaces immersed in these product spaces and the theory of $\Hss$ in $\r3$.

The theory of immersed surfaces in $\mkr$ has experimented an extraordinary development since Abresch and Rosenberg \cite{AbRo} defined a \emph{holomorphic quadratic differential} on every constant mean curvature surface, that vanishes on rotational examples. This quadratic differential, called in the literature the \emph{Abresch-Rosenberg differential}, enabled the authors to extend the so called Hopf theorem: an immersion of a constant mean curvature sphere in $\mkr$ is a rotational, embedded sphere. This milestone was the starting point for the growth of a fruitful theory of positive, constant mean curvature surfaces (\emph{CMC surfaces} in the following) in $\mkr$; see \cite{AbRo,HLR,NeRo} for a global picture of the development of this theory.

When trying to extend Equation \eqref{Hsupr3} to the product spaces $\mkr$, we find out two major difficulties: the spaces $\S^2(\kappa)\times\R$ do not carry a Lie group structure, and thus we are not able to define a left-invariant Gauss map in order to prescribe some function on a fixed sphere, just like in Equation \eqref{Hsupr3}. The difficulty when trying to extend this theory to the spaces $\mathbb{H}^2(\kappa)\times\R$ comes from the fact that they have two non-isometric Lie group structures: one unimodular and other non-unimodular, see \cite{MePe} for details.

Nonetheless, in $\mkr$ we have a notion of angle function as well, defined by measuring the projection of a unit normal vector field $\eta$ of $\sig$ onto the vertical Killing vector field $\partial_z$, just like in Equation \eqref{defangulo}. Bearing this in mind, we can define the following class of immersed surfaces in $\mkr$:
\begin{defi}\label{defiHsup}
Let be $\H\in C^1([-1,1])$. An immersed, oriented surface $\sig$ in $\M^2(\kappa)\times\R$ has \emph{prescribed mean curvature} $\H$ if its mean curvature $H_\sig$ satisfies
\begin{equation}\label{defHsup}
H_\sig(p)=\H(\nu(p)),\hspace{.5cm} \forall p\in\sig,
\end{equation}
where $\nu(p):=\langle\eta_p,\partial_z\rangle$ is the \emph{angle function} of $\sig$ and $\eta$ is a unit normal vector field on $\sig$.
\end{defi}
In analogy with the Euclidean case, we will simply say that $\sig$ is an $\H$\emph{-surface}.

Besides the trivial choice of $\H$ as a constant, there are other prescribed functions that generate some known classes of immersed surfaces in $\mkr$. Indeed, if we consider the function $\H(x)=x,\ \forall x\in [-1,1]$, the class of immersed surfaces described by Equation \eqref{defHsup} are the self-translating solitons of the mean curvature flow (MCF for short); see \cite{Bue,LiMa} for a recent development of this theory. These particular, almost trivial, choices of the prescribed function $\H$ and the classes of immersed surfaces generated by them, show the richness and wideness of the family of $\Hss$.

The rest of the introduction is devoted to detail the organization of the paper, and highlight some of the main results.

In \textbf{Section \ref{sec:2}} we will exhibit some basic properties of immersed $\Hss$ in the product spaces. We will show that $\Hss$ obey the geometric maximum principle, as they are locally governed by an elliptic, second order, quasilinear PDE. We make special emphasis in the ambient isometries that preserve the class of $\Hss$; any such isometry must keep invariant the angle function in order to preserve Equation \eqref{defHsup}. Specifically, in Lemma \ref{sime} we will show that, for an arbitrary prescribed function $\H$, almost all the isometries of the spaces $\mkr$ are induced as isometries for the class of $\Hss$.

When studying the properties of CMC surfaces in the product spaces, one of the key tools is the existence of a sphere with the same mean curvature. This is the motivation for the contents of Section \ref{analisisfases}, where we take advantage of the symmetries of Equation \eqref{defHsup} to study rotational $\Hss$ immersed in $\mkr$. We should emphasize that the arbitrariness of the prescribed function $\H$ prevents us from obtaining a \emph{first integral} to study these rotational examples. 

In the same fashion as in Section 2 in \cite{BGM2}, we approach this study by means of a phase plane analysis of the resulting ODE that the profile curve of a rotational $\Hs$ satisfies. In Section \ref{necesariaesfera} we obtain some necessary conditions on $\H$ in order to ensure the existence of an $\H$-sphere. We also prove in Theorem \ref{teoes} that a rotational $\H$-sphere has monotonous angle function, and thus is unique among all immersed $\H$-spheres due to Gálvez and Mira uniqueness theorem \cite{GaMi1}. 

Finally, in \textbf{Section \ref{clasification}} we give a classification of complete, rotational $\Hss$, provided that $\hipar$; see Equation \eqref{espaciosc1h} for a definition of the space $\mathfrak{C}^1_\kappa$. In Theorem \ref{existenciaesfera} we prove the existence of a rotational, embedded $\H$-sphere, and in Theorem \ref{delaunay} we announce a Delaunay-type classification result for $\Hss$. In the same fashion as in the CMC case, every complete rotational $\Hs$ is either an $\H$-sphere, a vertical circular cylinder,  a properly embedded surface of \emph{unduloid type} or a properly immersed (non-embedded) surface of \emph{nodoid type}. Moreover, in analogy to the CMC case, in the space $\s2r$ there also exist rotational, embedded $\H$-surfaces diffeomorphic to $\S^1\times\S^1$, i.e. rotational, embedded \emph{$\H$-tori}.
%
%

\section{\large Basic properties of $\H$-surfaces in the product spaces}\label{sec:2}
\vspace{-.5cm}

Let $\M^2(\kappa)$ be the complete, simply connected surface with constant curvature $\kappa$. Then, up to isometries, $\M^2(\kappa)$ is one of the following surfaces: if $\kappa=0$ we get the usual flat plane $\R^2$; if $\kappa<0$ we obtain the hyperbolic plane $\mathbb{H}^2(1/\sqrt{-\kappa})$; and if $\kappa>0$ we have the 2-sphere $\S^2(1/\sqrt{\kappa})$. We drop out the case $\kappa=0$, since the theory of immersed $\Hss$ in $\R^3$ has been widely studied in \cite{BGM1,BGM2}. When $\kappa\neq 0$, these non-flat surfaces can be regarded as isometric immersions in the space $\R^3_\kappa$, where $\R^3_\kappa$ stands for the usual Euclidean space if $\kappa>0$, or for the Lorentz-Minkowski space $\mathbb{L}^3$ (that is, $\r3$ endowed with the metric with signature $+,+,-$) if $\kappa<0$. Indeed, the surface $\M^2(\kappa)$ can be defined as the quadric
\begin{equation}\label{modelobase}
\M^2(\kappa)=\{(x_1,x_2,x_3)\in\R^3_\kappa;\ x_1^2+x_2^2+\kappa x_3^2=1/\kappa,\ (1-\kappa)x_3\geq 0\}.
\end{equation}

Up to an homothetic change of the metric, we will suppose that $\kappa=\pm 1$. Henceforth, we will drop the dependence on $\kappa$ and just write $\M^2$.

The product spaces $\m2r$ are defined as the riemannian product of the surface $\M^2$ with the real line, endowed with the usual product metric. In $\m2r$ we have the two usual projections $\pi_1:\m2r\rightarrow\M^2$ and $\pi_2:\m2r\rightarrow\R$. The \emph{height function} is defined to be the second projection, and is commonly denoted by $z(p):=\pi_2(p)$ for all $p\in\m2r$. The gradient of the height function is a unitary Killing vector field, which is usually denoted in the literature by $\partial_z$, and the direction generated by $\partial_z$ is called the \emph{vertical direction}. The projection $\pi_1$ is a riemannian submersion, whose fibers are the \emph{vertical lines} $\{p\}\times\R:=\pi_1^{-1}(p)$, for $p\in\M^2$. The \emph{vertical planes} are the surfaces given by $\gamma\times\R:=\pi_1^{-1}(\gamma)$, where $\gamma\subset\M^2$  is a geodesic, which are totally geodesic surfaces isometric to $\R^2$. The \emph{horizontal planes} are the surfaces given by $\M^2\times\{t_0\}:=\pi_2^{-1}(t_0),\ t_0\in\R$, which are totally geodesic surfaces isometric to $\M^2$.

Let $\sig$ be an $\Hs$ and $\eta$ a unit normal vector field on $\sig$, and take some $p\in\sig$. Suppose that $\eta_p$ is not an horizontal vector. Thus, the implicit function theorem ensures us that a neighborhood of $p$ in $\sig$ can be expressed as a vertical graph $(x,u(x))$ of a function $u:\Omega\subset\M^2\rightarrow\R$. In this situation, Equation \eqref{defHsup} has the following divergence expression
$$
{\rm div_{\M^2}}\left(\displaystyle \frac{\nabla^{\M^2} u}{\sqrt{1+|\nabla^{\M^2} u|^2}}\right) = 2\H \left(\displaystyle\frac{1}{\sqrt{1+|\nabla^{\M^2}u|^2}}\right),
$$
where ${\rm div_{\M^2}}$ and $\nabla^{\M^2}$ are the divergence and gradient operators, both computed w.r.t. the metric of $\M^2$. If $\eta_p$ is horizontal, then $\sig$ can be expressed as a horizontal graph which also satisfies a divergence-type equation, see e.g. Section 5 in \cite{Maz}. In particular, $\Hss$ satisfy the Hopf maximum principle in its interior and boundary versions, a property that has the following geometric implication:

\begin{lem}[\textbf{Maximum principle for $\H$-surfaces}]\label{pmax}
Given $\hi$, let $\Sigma_1,\Sigma_2$ be two $\H$-surfaces, possibly with non-empty, smooth boundary. Assume that one of the following two conditions holds:
\begin{enumerate}
\item
There exists $p\in {\rm int}(\Sigma_1)\cap {\rm int}(\Sigma_2)$ such that $\eta_1(p)=\eta_2(p)$, where $\eta_i$ is the unit normal of $\Sigma_i$, $i=1,2$.
\item
There exists $p\in \partial \Sigma_1 \cap \partial \Sigma_2$ such that $\eta_1(p)=\eta_2(p)$ and $\xi_1(p)=\xi_2(p)$, where $\xi_i$ denotes the interior unit conormal of $\partial \Sigma_i$.
\end{enumerate}
Assume moreover that $\Sigma_1$ lies around $p$ at one side of $\Sigma_2$. Then $\Sigma_1=\Sigma_2$.
\end{lem}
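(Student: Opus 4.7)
The plan is to reduce the statement to a Hopf-type maximum principle for a linear elliptic equation, obtained by linearizing the quasilinear divergence equation displayed right before the lemma.

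First, I would work in a neighborhood of the contact point $p$. By hypothesis, the unit normals of $\sig_1$ and $\sig_2$ coincide at $p$. In case (1), after an ambient isometry (using Lemma \ref{sime}) we may assume that $\eta(p)$ is transverse to the horizontal plane, so a neighborhood of $p$ in each $\sig_i$ is a vertical graph $x\mapsto (x,u_i(x))$ over a common domain $\Omega\subset\M^2$, with the same orientation. (If $\eta(p)$ is horizontal, one runs the same argument with a horizontal graph representation \cite{Maz}.) The hypothesis that $\sig_1$ lies at one side of $\sig_2$ near $p$ together with the coincidence of normals lets us arrange $u_1\geq u_2$ on $\Omega$ with $u_1(p_0)=u_2(p_0)$ at the horizontal projection $p_0$ of $p$.

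Next I would linearize. Both $u_i$ solve the quasilinear elliptic equation
\[
Q[u] := \mathrm{div}_{\M^2}\!\left(\frac{\nabla^{\M^2} u}{\sqrt{1+|\nabla^{\M^2}u|^2}}\right) - 2\,\H\!\left(\frac{1}{\sqrt{1+|\nabla^{\M^2}u|^2}}\right) = 0.
\]
Writing $Q[u_1]-Q[u_2]=\int_0^1 \frac{d}{dt}Q[tu_1+(1-t)u_2]\,dt$ and using $\H\in C^1$, standard bookkeeping yields a linear elliptic operator $L$ with bounded continuous coefficients such that $Lw=0$ on $\Omega$, where $w:=u_1-u_2\geq 0$ and $w(p_0)=0$. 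Ellipticity is preserved along the linear path $tu_1+(1-t)u_2$ because the relevant principal symbol comes from the smooth, uniformly elliptic map $q\mapsto q/\sqrt{1+|q|^2}$ evaluated on a bounded range of gradients.

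The conclusion then comes from Hopf's theorem. In case (1), the interior strong maximum principle forces $w\equiv 0$ near $p_0$, so $\sig_1$ and $\sig_2$ agree on an open neighborhood of $p$. In case (2), the coincidence $\xi_1(p)=\xi_2(p)$ of the interior conormals translates, in the graph description, into the vanishing of the interior normal derivative $\partial_\nu w(p_0)=0$ at a boundary minimum of $w$; Hopf's boundary point lemma then again gives $w\equiv 0$ near $p_0$. In either case the coincidence set $\{\sig_1=\sig_2\}$ is open in $\sig_1$; it is closed by continuity and nonempty by hypothesis, and a connectedness argument on $\sig_1,\sig_2$ concludes $\sig_1=\sig_2$.

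The only delicate point I expect is the linearization step: one must check that both the divergence term and the nonlinearity $\H(1/\sqrt{1+|\nabla u|^2})$ differentiate to contributions whose coefficients remain in $L^\infty$ (and continuous) on $\Omega$. This is routine because the maps $q\mapsto q/\sqrt{1+|q|^2}$ and $q\mapsto 1/\sqrt{1+|q|^2}$ are smooth with bounded derivatives, and $\H$ is $C^1$ on the compact interval $[-1,1]$, so the dependence on $\nu$ produces a bounded zero-order coefficient for $L$. Once this is in place, the classical Hopf maximum principle (interior and boundary versions) applies verbatim.
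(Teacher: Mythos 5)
Your proof is correct and is exactly the standard argument the paper has in mind: the paper gives no proof of Lemma \ref{pmax} at all, merely asserting it as a consequence of the divergence-form quasilinear elliptic equation displayed just before the statement, and your linearization-plus-Hopf argument (interior and boundary versions, followed by the open-closed connectedness step) is the canonical way to fill that in. Two small corrections are worth making. First, an ambient isometry of $\m2r$ that preserves the class of $\H$-surfaces cannot turn a horizontal normal into a non-horizontal one: every such isometry is of the form $\Phi_{\M^2}\times T_\lambda$ and preserves $\partial_z$, hence the angle function, so your parenthetical fallback to the horizontal-graph representation of \cite{Maz} is not an optional alternative but the required argument whenever $\nu(p)=0$. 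Second, since the angle function $\nu=1/\sqrt{1+|\nabla^{\M^2}u|^2}$ depends on $\nabla u$ and not on $u$ itself, the term $2\H(\nu)$ linearizes to a bounded \emph{first}-order coefficient rather than a zero-order one; this is harmless (indeed convenient, as no sign condition on a zero-order coefficient is then needed to apply Hopf's maximum principle).
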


Now let us focus in the ambient isometries and how they act on the class of $\Hss$. Besides the space forms $\R^3,\mathbb{H}^3$ and $\S^3$, whose isometry group has dimension six (the highest for a 3-dimensional space), the product spaces $\m2r$ have isometry group of dimension four, the second highest for a 3-dimensional space. Indeed, the product structure decomposes the isometry group $\mathrm{Iso}(\m2r)$ as $\mathrm{Iso}(\M^2)\times\mathrm{Iso}(\R)$. Notice that $\mathrm{Iso}(\R)$ is just the group of translations in the real line, and their elements are described as $T_\lambda:\R\rightarrow\R,\ T_\lambda(a):=a+\lambda$, for every $a\in\R$. Thus, every isometry $\Phi\in \mathrm{Iso}(\m2r)$ decomposes as $\Phi=\Phi_{\M^2}\times T_\lambda$, for some $\lambda\in\R$. The isometry $\mathrm{Id}_{\M^2}\times T_\lambda$ is commonly known as the \emph{vertical translation of (oriented) distance $\lambda$}. The 1-parameter group $\lambda\mapsto\mathrm{Id}_{\M^2}\times T_\lambda$ is the flow of the vertical Killing vector field $\partial_z$.

Given a point $p\in\M^2$, consider the rotation $\mathrm{Rot}_{\M^2,p}$ of $\M^2$ that fixes $p$. Then, the isometry $\mathrm{Rot}_{\M^2,p}\times\mathrm{Id}_\R$ is a rotation in $\m2r$ that leaves pointwise fixed the vertical line $\{p\}\times\R$. In the same fashion, let $\mathrm{R}_{\M^2,\gamma}$ be the reflection w.r.t. a geodesic $\gamma$ of $\M^2$. Then, the isometry $\mathrm{R}_{\M^2,\gamma}\times\mathrm{Id}_\R$ is a \emph{vertical reflection} in $\m2r$ w.r.t. the vertical plane $\gamma\times\R$. Lastly, given $a\in\R$, the isometry in $\m2r$ defined by $\cR_a(p,t):=(p,2a-t)$ for all $(p,t)\in\m2r$ is the \emph{horizontal reflection} w.r.t. the horizontal plane $\M^2\times\{a\}$.

Observe that given $\hi$, an $\Hs$ $\sig$ and an isometry $\Phi$ of $\m2r$, if we ask $\Phi(\sig)$ to be an $\Hs$ for the same prescribed function $\H$, then the r.h.s. of Equation \eqref{defHsup} implies that $\Phi$ must keep invariant the angle function $\nu$ of $\sig$. Thus, every isometry of the form $\Phi_{\M^2}\times\mathrm{Id}_\R$ will be also an isometry for the class of $\Hss$. Note that, in particular, reflections w.r.t. vertical planes and rotations are isometries for $\Hss$. Also, vertical translations $\mathrm{Id}_{\M^2}\times T_\lambda$ will be included among isometries for $\Hss$. The only missing isometries are the reflections with respect to horizontal planes, since these isometries change the value of the angle function of $\sig$ and thus the r.h.s. of Equation \eqref{defHsup}. We summarize these facts in the following lemma:

\begin{lem}\label{sime}
Given $\hi$, let be $\Sigma$ an $\H$-surface, $\eta$ a unit normal vector field of $\sig$ and $\Phi$ an isometry of $\m2r$. Moreover, suppose that $\Phi$ is not a horizontal reflection. Then, $\Phi(\Sigma)$ is an $\H$-surface in $\m2r$ with respect to the orientation given by $d\Phi(\eta)$.
\end{lem}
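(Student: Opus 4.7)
The plan is to exploit the product decomposition of $\mathrm{Iso}(\m2r)$ to show that any $\Phi$ which is not (nor contains) a horizontal reflection component preserves the vertical Killing field $\partial_z$, and then to use this fact to match the angle function of $\Phi(\sig)$ with that of $\sig$.

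First I would decompose $\Phi \in \mathrm{Iso}(\m2r) = \mathrm{Iso}(\M^2) \times \mathrm{Iso}(\R)$. Since $\mathrm{Iso}(\R)$ consists of translations $T_\lambda$ and of orientation-reversing maps of the form $t \mapsto 2a - t$ (i.e.\ compositions of $T_{2a}$ with $-\mathrm{Id}$), and since the latter are precisely those giving rise to a horizontal reflection factor, the hypothesis that $\Phi$ is not a horizontal reflection allows us to write $\Phi = \Phi_{\M^2} \times T_\lambda$ for some $\Phi_{\M^2} \in \mathrm{Iso}(\M^2)$ and $\lambda \in \R$. Differentiating the flow of $\partial_z$, i.e.\ the $1$-parameter group $\mathrm{Id}_{\M^2} \times T_s$, and using that $\Phi_{\M^2} \times T_\lambda$ commutes with this flow, I get $d\Phi(\partial_z) = \partial_z$ pointwise.

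Next I would use two standard facts: isometries preserve mean curvature (up to the sign induced by the chosen orientation), and isometries preserve the inner product. Setting $\tilde\sig = \Phi(\sig)$ with orientation $\tilde\eta := d\Phi(\eta) \circ \Phi^{-1}$, the first fact yields $H_{\tilde\sig}(\Phi(p)) = H_\sig(p)$ for every $p \in \sig$. For the angle function of $\tilde\sig$ at $\Phi(p)$, I compute
\begin{equation*}
\tilde\nu(\Phi(p)) = \langle \tilde\eta_{\Phi(p)}, \partial_z|_{\Phi(p)}\rangle = \langle d\Phi_p(\eta_p), d\Phi_p(\partial_z|_p)\rangle = \langle \eta_p, \partial_z|_p\rangle = \nu(p),
\end{equation*}
where the second equality uses the step proved above, $d\Phi(\partial_z) = \partial_z$, and the third uses that $\Phi$ is an isometry. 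Combining,
\begin{equation*}
H_{\tilde\sig}(\Phi(p)) = H_\sig(p) = \H(\nu(p)) = \H(\tilde\nu(\Phi(p))),
\end{equation*}
which is exactly Equation \eqref{defHsup} for $\tilde\sig$ with orientation $\tilde\eta$, so $\tilde\sig = \Phi(\sig)$ is an $\H$-surface.

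I do not anticipate a genuine obstacle here; the only delicate point is bookkeeping in the first step, namely being careful that the hypothesis really does rule out exactly the elements of $\mathrm{Iso}(\m2r)$ whose $\R$-component reverses orientation, since those are the only isometries that flip $\partial_z$ and hence the angle function. Once this is settled, the remaining equalities are immediate from the definitions and the fact that isometries preserve inner products and mean curvature.
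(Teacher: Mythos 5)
Your proposal is correct and follows essentially the same route as the paper, which justifies the lemma via the preceding discussion: the decomposition $\Phi=\Phi_{\M^2}\times T_\lambda$ (valid once horizontal reflections are excluded) forces $d\Phi(\partial_z)=\partial_z$, so the angle function and the mean curvature are both preserved and Equation \eqref{defHsup} carries over to $\Phi(\sig)$. Your version simply makes explicit the computation that the paper leaves informal.
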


Suppose now that $\H$ is even, i.e. $\H(y)=\H(-y),\ \forall y\in [-1,1]$. If $\cR_a$ is a horizontal reflection for some $a\in\R$, and we take some $p\in\Sigma$, then the angle function $\nu^*$ of the reflected surface $\Sigma^*=\cR_a(\Sigma)$ in $p^*=\cR(p)$ satisfies $\nu^*(p^*)=-\nu(p)$. Hence,
$$
H_{\Sigma^*}(p^*)=\H(\nu^*(p^*))=\H(-\nu(p))=\H(\nu(p))=H_\Sigma(p).
$$
In particular, for even functions, the horizontal reflections in $\m2r$ are induced as isometries for the class of $\Hss$, and thus \emph{all the isometries of the space} $\m2r$ \emph{are also isometries for the class of immersed} $\H$\emph{-surfaces}.

The following proposition is a consequence of Lemmas \ref{pmax} and \ref{sime}, and is a generalization of Alexandrov's theorem for closed, embedded CMC surfaces.

\begin{pro}\label{alexandrov}
Let be $\hi$ and $\sig$ a closed, embedded $\H$-surface in $\h2r$ or $\S^2_+\times\R$, where $\S^2_+$ denotes an open hemisphere of $\S^2$. Then, $\sig$ is topologically a sphere and it is rotational around some vertical axis. Moreover, if $\H$ is also even, then $\sig$ is a symmetric, vertical bi-graph over some horizontal plane $\M^2\times\{t_0\},\ t_0\in\R$.
\end{pro}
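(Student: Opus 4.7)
The plan is to adapt Alexandrov's moving-planes method to the product space $\mkr$, using the isometries provided by Lemma~\ref{sime} together with the maximum principle Lemma~\ref{pmax}. I first fix a geodesic $\alpha\subset\M^2$ and consider the one-parameter family $\{\gamma_s\}_{s\in\R}$ of geodesics of $\M^2$ perpendicular to $\alpha$ at $\alpha(s)$, which foliates $\mathbb{H}^2$ (and also foliates an open hemisphere $\S^2_+$ of $\S^2$ when $\alpha$ is chosen as an equator). The vertical totally geodesic planes $P_s:=\gamma_s\times\R$ have reflections $R_s$ that preserve the class of $\H$-surfaces by Lemma~\ref{sime}. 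Using compactness of $\sig$, I begin with $|s|$ so large that $P_s\cap\sig=\emptyset$, slide $P_s$ until the first tangential contact at some $s=s_0$, and then, as $s$ decreases below $s_0$, reflect the cap $\sig_s^+:=\sig\cap\{\text{far side of }P_s\}$ to the new $\H$-surface $R_s(\sig_s^+)$. Let $s_1<s_0$ be the first value of $s$ at which $R_s(\sig_s^+)$ touches $\sig\setminus\sig_s^+$: this contact is either interior to both surfaces, or occurs along the boundary $\partial\sig_{s_1}^+\subset P_{s_1}$ where both sheets are tangent to $P_{s_1}$. In either case Lemma~\ref{pmax} forces $R_{s_1}(\sig_{s_1}^+)=\sig\cap\{\text{near side of }P_{s_1}\}$, and hence $R_{s_1}(\sig)=\sig$.

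Varying the initial geodesic $\alpha$ produces, for every horizontal direction in $\M^2$, a vertical plane of symmetry of $\sig$. Since the composition of reflections across two non-parallel vertical planes is a non-trivial rotation around their common vertical axis, a routine argument forces all the symmetry planes obtained this way to share a common vertical line $L$, so that $\sig$ is invariant under the entire one-parameter group of rotations around $L$. Such a rotationally invariant closed embedded surface is the revolution of an embedded closed profile curve $\beta$ in a vertical half-plane bounded by $L$; either $\beta$ meets $L$ orthogonally at exactly two points (yielding a topological sphere) or $\beta$ is disjoint from $L$ (yielding a torus). The torus alternative is the principal obstacle of the proof, since Alexandrov reflection with vertical planes does not by itself distinguish between the two topological types; I would rule it out by appealing to the phase plane analysis of rotational $\H$-surfaces carried out in Section~\ref{analisisfases}, which will show that no embedded closed profile curve disjoint from a vertical axis can occur in either $\h2r$ or $\S^2_+\times\R$.

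For the moreover clause, when $\H$ is even the horizontal reflections $\cR_a$ are also admissible isometries for the class of $\H$-surfaces, as observed in the paragraph immediately preceding the proposition. Running the analogous Alexandrov argument with the one-parameter family of horizontal planes $\M^2\times\{t\}$ in place of $\{P_s\}$ produces a height $t_0\in\R$ such that $\cR_{t_0}(\sig)=\sig$. Combined with the embeddedness and the rotational invariance already established, this horizontal symmetry forces $\sig$ to decompose as two congruent vertical graphs meeting along the curve $\sig\cap(\M^2\times\{t_0\})$, which is precisely the claimed symmetric vertical bi-graph structure over $\M^2\times\{t_0\}$.
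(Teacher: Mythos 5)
Your overall strategy (moving vertical planes, then horizontal planes when $\H$ is even) is the same as the paper's, but there are two concrete problems. The first and more serious one is how you dispose of the torus alternative. You propose to rule out an embedded rotational profile curve disjoint from the axis by ``appealing to the phase plane analysis of Section \ref{analisisfases}''. That is not available here: the phase-plane classification of complete rotational $\Hss$ (Theorem \ref{delaunay}) is proved only for $\H\in\mathfrak{C}^1_\kappa$ (even and, for $\kappa=-1$, satisfying $2|\H|>\sqrt{1-y^2}$), whereas the present proposition assumes only $\H\in C^1([-1,1])$; worse, the paper's proof of Theorem \ref{delaunay} invokes Proposition \ref{alexandrov} precisely to exclude embedded rotational tori in $\h2r$, so your argument would be circular. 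The torus does not need a separate argument at all: the moving-planes method, run to its standard conclusion, shows not only that $\sig$ is symmetric about the limiting plane $P_{s_1}$ but also that the cap $\sig_{s_1}^+$ is a graph over $P_{s_1}$ along the leaves of the horizontal foliation (otherwise an interior or boundary tangency would have occurred for some $s>s_1$). For a rotational torus with vertical axis $L$, every vertical symmetry plane contains $L$, and the half of the torus beyond such a plane is never a graph in the orthogonal horizontal direction (leaves passing close to $L$ at the height of the torus' ``equator'' meet it twice on each side). So the bi-graph property already delivered by the reflection argument excludes the torus and gives the genus-zero conclusion; this is what the paper's ``carries over verbatim'' is silently using, and your write-up discards exactly that part of the output.

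The second problem is your foliation in the spherical case. The geodesics of $\S^2$ perpendicular to an equator $\alpha$ are the meridians through the two poles of $\alpha$; they all intersect at those poles, and one of the poles lies in the open hemisphere bounded by $\alpha$, so this family does not foliate $\S^2_+$ and the associated vertical planes cannot be ``slid'' disjointly across $\sig$. The paper's construction avoids this by fixing a point $p$ on the boundary great circle $\Gamma=\partial\S^2_+$ and rotating $\Gamma$ about $p$: the resulting great circles all pass through $p$ and $-p$, which lie on $\partial\S^2_+$, so their traces do foliate the open hemisphere and the rotating-planes version of Alexandrov applies. With these two corrections (use the graph property of the reflected cap to get the sphere topology, and use the rotating family of great circles through a boundary point for $\S^2_+$), the rest of your argument — concurrency of the symmetry planes, and the horizontal reflections for even $\H$ yielding the vertical bi-graph structure — is sound and matches the paper.
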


\begin{proof}
If the base of the space is $\mathbb{H}^2$, then the classical Alexandrov reflection technique in $\R^3$ carries over verbatim to $\Hss$ in $\h2r$. For this, consider a geodesic $\gamma\subset\mathbb{H}^2$, and let $\mathcal{F}_\gamma$ be a foliation of $\mathbb{H}^2$ by geodesics parallel to $\gamma$. Then, we apply the classical Alexandrov reflection technique with respect to the family of vertical planes $\mathcal{F}_\gamma\times\R$, which is a foliation of $\h2r$ by totally geodesic surfaces, in order to ensure that $\sig$ is symmetric with respect to some vertical plane. By changing the geodesic $\gamma$, the result holds.

However, when the base is $\S^2$, we need $\sig$ to project onto some hemisphere $\S^2_+$, as happens for CMC surfaces\footnote{Indeed, in $\s2r$ there exist rotational CMC tori described by Pedrosa and Ritoré.}. Indeed, suppose that $\pi_1(\sig)$ is contained in some hemisphere $\S^2_+$, whose boundary is a geodesic $\Gamma\subset\S^2$. Fix some $p\in\Gamma$, consider $\mathrm{Rot}_{p,\theta}$ the rotation of angle $\theta$ in $\S^2$ that fixes $p$, and define $\Gamma_\theta:=\mathrm{Rot}_{p,\theta}(\Gamma)$. Note that $\Gamma_0=\Gamma$ and $\Gamma_0\times\R$ does not intersect $\sig$. Now, we apply Alexandrov reflection technique w.r.t. the 1-parameter family of vertical planes $\{\Gamma_\theta\times\R\}_\theta$, which yields that $\sig$ is symmetric with respect some $\Gamma_{\theta_0}\times\R$. Varying the point $p\in\gamma$, we conclude the result.

Finally, if $\H$ is even we can make reflections with respect to the foliation of horizontal planes $\M^2\times\{t\}$, $t\in\R$, and apply again Alexandrov reflection technique in order to ensure that $\sig$ is also a symmetric, vertical bi-graph, concluding the proof.
\end{proof}

\textbf{In the development of this paper, the (possible) zeros of a prescribed function $\H\in C^1([-1,1])$ will be supposed to be \emph{isolated and of finite multiplicity}.}

\section{\large A phase plane analysis}\label{analisisfases}
\vspace{-.5cm}

In the development of this section, we regard $\m2r$ as a submanifold of $\R^3_\kappa\times\R$ endowed with the metric $+,+,\mathrm{sign}(\kappa),+$, $\kappa=\pm 1$. Consider an arc-length parametrization of a regular curve $\alpha_\kappa(s)=(x_1(s),0,x_3(s),z(s))\subset\m2r$, $x_1(s)>0,\ s\in I\subset\R$, which is contained in a vertical plane passing through the point $(0,0,1,0)$, and rotate it around the vertical axis $\{(0,0,1)\}\times\R$. The orbit of $\alpha_\kappa(s)$ under this 1-parameter group of rotations generates an immersed surface $\sig$. Because $\alpha_\kappa(s)\in\m2r$, its first coordinates satisfy $x_1^2(s)+\kappa x_3^2(s)=\kappa$ and thus there exists a $C^1$ function $x(s)>0$ such that 
\begin{equation}\label{eccurva}
\alpha_\kappa(s)=(\sin_\kappa (x(s)),0,\cos_\kappa (x(s)),z(s)),\hspace{.5cm} \kappa=\pm 1,
\end{equation}
where the trigonometric function $\sin_\kappa$ is the usual sine function if $\kappa=1$ and the hyperbolic sine if $\kappa=-1$; the same holds for $\cos_\kappa$. \textbf{For saving notation, we will simply denote by $\alpha_\kappa=(x(s),z(s))$ to the profile curve defined in Equation \eqref{eccurva}.}

This construction generates a regular surface $\sig$ immersed in $\m2r$, parametrized by
\begin{equation}\label{parametrot}
\psi_\kappa(s,\theta)=(\sin_\kappa (x(s))\cos\theta,\sin_\kappa (x(s))\sin\theta,\cos_\kappa (x(s)),z(s))\hspace{.5cm} s\in I,\ \theta\in (0,2\pi).
\end{equation}
The angle function of $\sig$ at each point $\psi_\kappa(s,\theta)$ is given by $\nu(\psi_\kappa(s,\theta))=x'(s),\ \forall s\in I$.

If the same fashion, we define the function
$$
\cot_\kappa x(s)=\left\lbrace\begin{array}{ll}\cot x(s), & \mathrm{if}\ \kappa=1,\\ 
\coth x(s), & \mathrm{if}\ \kappa=-1.
\end{array}\right.
$$
For saving notation, we will omit from now the dependence of the variable $s$. With this parametrization, the principal curvatures of $\sig$ are given by
\begin{equation}\label{curvaprincipales}
\kappa_1=k_{\alpha_\kappa}=x'z''-x''z',\hspace{.5cm} \kappa_2=z'\frac{\cos_\kappa x}{\sin_ \kappa x}=z'\cot_\kappa x,
\end{equation}
where $k_{\alpha_\kappa}$ is the \emph{geodesic curvature} of $\alpha_\kappa$.

In this setting, the mean curvature $H_\sig$ of $\sig$ satisfies the ODE
\begin{equation}\label{ODE1}
2H_\sig(\psi_\kappa)=x'z''-x''z'+z'\cot_\kappa x.
\end{equation}
As $\alpha_\kappa$ is an arc-length parametrized curve, the relation $x'^2+z'^2=1$ holds, and thus the function $x$ is a solution to the second order autonomous ODE
\begin{equation}\label{ODE2}
x''=\frac{1-x'^2}{\tan_\kappa x}-2\varepsilon H_\sig\sqrt{1-x'^2},\hspace{.5cm} \varepsilon=\mathrm{sign}(z'),
\end{equation}
on every subinterval $J\subset I$ where $z'(s)\neq 0,\ \forall s\in J$.

Suppose now that $\sig$ is an $\Hs$ for some $\hi$, i.e. $H_\sig(\psi_\kappa)=\H(x')$. If we denote by $y=z'$, we can write Equation \eqref{ODE2} as the first order autonomous system
\begin{equation}\label{1ordersys}
\left(\begin{array}{c}
x'\\
y'
\end{array}\right)=\left(\begin{array}{c}
y\\
\displaystyle{\frac{1-y^2}{\tan_\kappa x}}-2\varepsilon\H(y)\sqrt{1-y^2}
\end{array}\right)=F_{\kappa,\varepsilon}(x,y).
\end{equation}
At this point, we shall study system \eqref{1ordersys} with a \emph{phase plane} analysis as the authors did in \cite{BGM2}.

We define the phase plane $\Theta^\kappa_\varepsilon$ of \eqref{1ordersys} as the half-strip $\Theta^{-1}_\varepsilon:=(0,\infty)\times(-1,1)$ if $\kappa=-1$ and $\Theta^1_\varepsilon:=(0,\pi)\times(-1,1)$ if $\kappa=1$, with coordinates $(x,y)$ denoting, respectively, the distance to the rotation axis and the angle function of $\alpha_\kappa$. The solutions of system \eqref{1ordersys} will be called \emph{orbits}, and will be represented by $\gamma(s)=(x(s),y(s))$. Note that in the case $\kappa=1$, as the base $\S^2$ is compact, the maximum distance that a point can reach from the axis of rotation is exactly half of the length of a great circle of $\S^2$; for that maximum distance equal to $\pi$, we already reach the antipodal axis. 

The points in $\Theta^\kappa_\varepsilon$ where $y'=0$ correspond to the points $\alpha_\kappa$ where the angle function of $\sig$ has vanishing derivative, and by Equation \eqref{curvaprincipales} these points correspond also to the points where the geodesic curvature $k_{\alpha_\kappa}$ vanishes. By analyzing the second component of the function $F_{\kappa,\varepsilon}(x,y)$ in \eqref{1ordersys}, we conclude that these points are placed at the intersection of $\Theta^\kappa_\varepsilon$ with the (possibly disconnected) horizontal graph given by
\begin{equation}\label{graga}
x=\Gamma^\kappa_\varepsilon(y)=\arctan_\kappa\left(\frac{\sqrt{1-y^2}}{2\varepsilon\H(y)}\right).
\end{equation}
We will denote by $\Gamma^\kappa_\varepsilon=\Theta^\kappa_\varepsilon\cap\{x=\Gamma^\kappa_\varepsilon(y)\}$. Note that in some cases the curve $\Gamma^\kappa_\varepsilon$ might be empty; for example, for the case $\kappa=-1$, $\H<0$ and $\varepsilon=1$.

If $\kappa=-1$ the curve $\Gamma_\varepsilon^{-1}$ has an asymptote where the $\arctanh$ function fails to be defined. This occurs when the argument of the $\arctanh$ function is equal to $\pm 1$, since for these values $\arctanh(\pm 1)=\pm\infty$. Thus, $\Gamma_\varepsilon^{-1}$ has an asymptote at the line $\{y=y_0\}$ if and only if $\sqrt{1-y_0^2}=2\varepsilon\H(y_0)$ for some $y_0\in[-1,1]$; see Figure \ref{fasesh2r}.

\begin{figure}[H]
\centering
\includegraphics[width=0.6\textwidth]{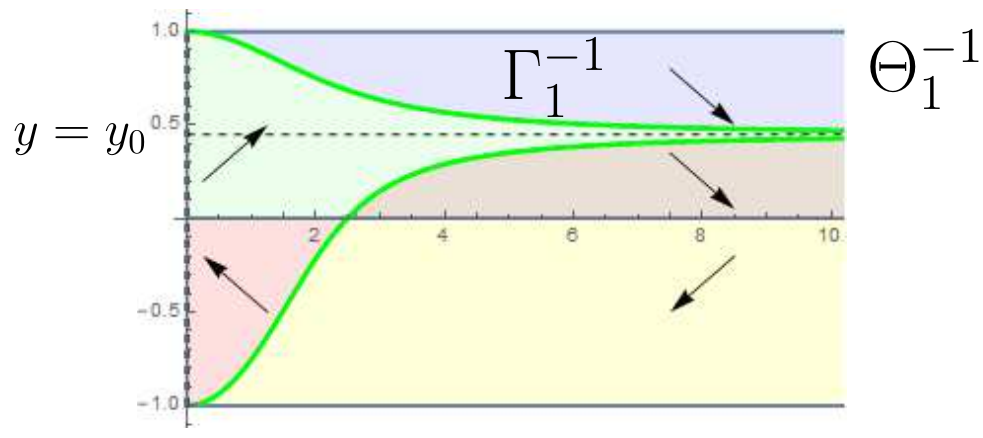}  
\caption{The phase plane $\Theta_1^{-1}$, for $\varepsilon=1$ and $\kappa=-1$. Here the curve $\Gamma_1^{-1}$ has an asymptote at some $y_0$. The arrows show how an orbit behave in each component.}
\label{fasesh2r}
\end{figure}

The case $\kappa=1$ is detailed next. Suppose that there exists some $y_0\in[-1,1]$ such that $\H(y_0)=0$. Then, $\Gamma_\varepsilon^1(y_0)=\arctan(\pm\infty)=\pm\pi/2$, proving that $\Gamma_\varepsilon^1(y)$ takes finite values at the zeros of $\H$. As we can extend by periodicity the $\arctan$ function, the graph $\Gamma_\varepsilon^1$ can be extended at the zeros\footnote{Recall that the zeros of $\H$ are finite, hence isolated.} of $\H$ as follows:
\begin{equation}\label{extensions2r}
\begin{array}{ll}
\tilde{\Gamma_\varepsilon^1}(y)=\pi+\Gamma_\varepsilon^1(y),&\ \mathrm{if}\ \H\ \mathrm{changes\ of\ sign\ around}\ y_0,\\
\tilde{\Gamma_\varepsilon^1}(y)=\Gamma_\varepsilon^1(y),&\ \mathrm{if}\ \H\ \mathrm{does\ not\ change\ of\ sign\ around}\ y_0.
\end{array}
\end{equation}
We will keep naming $\Gamma_\varepsilon^1$ to all the extensions glued at the zeros of $\H$, see Figure \ref{fasess2r}, left.

\begin{figure}[H]
\centering
\includegraphics[width=1.05\textwidth]{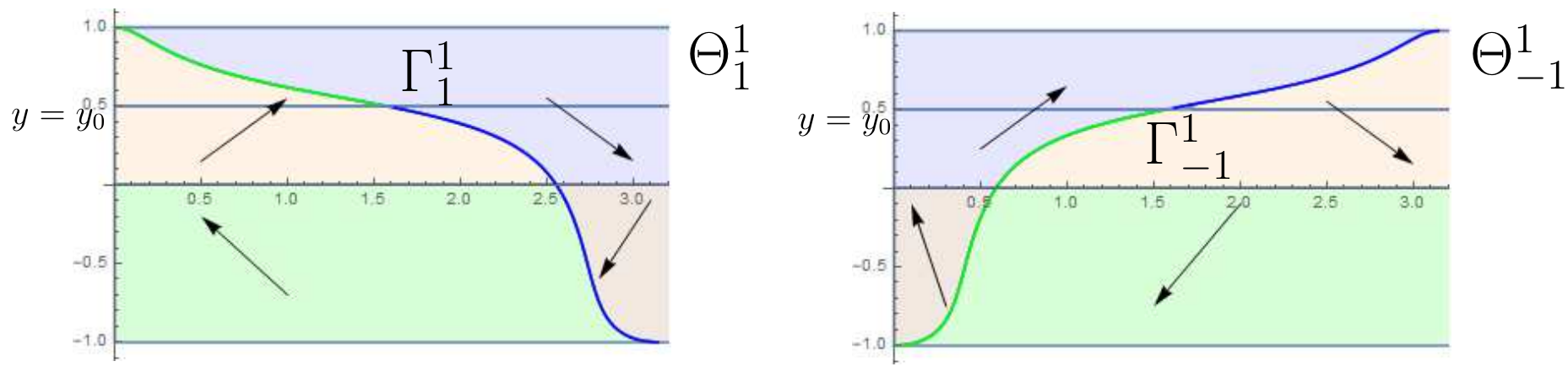}  
\caption{Left: the phase plane $\Theta_1^1$. The curve $\Gamma_1^1$ has been extended at $y_0$, where $\H$ changes of sign. The components of this extension are plotted in green and blue. Right: the phase plane $\Theta_{-1}^1$. Observe the symmetry between the phase planes $\Theta_\varepsilon^1$, for $\varepsilon=\pm 1$, w.r.t. the vertical line $\{x=\pi/2\}$.}
\label{fasess2r}
\end{figure}

Let us study deeper the case $\kappa=1$. For that, let $(x(s),y(s))$ be an orbit in $\Theta_\varepsilon^1$ and let us define $(\overline{x}(s),\overline{y}(s))=(\pi-x(-s),y(-s))$, that is $(\overline{x}(s),\overline{y}(s))$ is just the orbit $(x(s),y(s))$ symmetrized w.r.t. the vertical segment $\{x=\pi/2\}$ and with backwards movement. Then, $(\overline{x}(s),\overline{y}(s))$ is a solution of \eqref{1ordersys} for $-\varepsilon$. In particular, \textbf{the phase spaces $\Theta_\varepsilon^1$ are symmetric with respect to the vertical segment $\{x=\pi/2\}$}. This has the following implication: let $\gamma$ be an orbit in $\Theta_\varepsilon^1$ and consider its symmetric $\overline{\gamma}$ in $\Theta_{-\varepsilon}^1$. Name $\alpha_\gamma$ and $\alpha_{\overline{\gamma}}$ to the profile curves associated to $\gamma$ and $\overline{\gamma}$, respectively. Then, $\alpha_\gamma$ and $\alpha_{\overline{\gamma}}$ are symmetric w.r.t. the plane $\{(x,y,0)\}\times\R,\ (x,y,0)\in\S^2$.

This symmetry condition will play a crucial role in the study of rotational $\Hss$ in $\s2r$. For example, the graphs $\Gamma_\varepsilon^1,\ \varepsilon=\pm 1$ can be defined one by means of the other as
\begin{equation}\label{gragaenfunciondelaotra}
\Gamma_{-\varepsilon}^1(y)=\pi-\Gamma_\varepsilon^1(y),\ \forall y\in [-1,1].
\end{equation}
See Figure \ref{fasess2r}, right.

The \emph{equilibrium} points are the points $e_0^{\varepsilon,\kappa}=(x_0,y_0)\in \Theta^\kappa_\varepsilon$ such that $F_{\kappa,\varepsilon}(x_0,y_0)=0$. Note that these points must lie in the axis $\{y=0\}$, according to Equation \eqref{1ordersys}. \textbf{Henceforth, we will identify} $e_0^{\varepsilon,\kappa}\equiv(e_0^{\varepsilon,\kappa},0)$.

If $\kappa=-1$, there is a unique equilibrium $e_0^{\varepsilon,-1}$ in $\Theta^{-1}_\varepsilon$ if  $\varepsilon\H(0)>0$, namely
\begin{equation}\label{equilibrium}
e_0^{\varepsilon,-1}=\displaystyle{\arctanh\left(\frac{1}{2\varepsilon\H(0)}\right)}
\end{equation}
This equilibrium point corresponds to the case where $\sig$ has constant distance to the axis of rotation and vanishing angle function everywhere; that is, $\sig$ is a right circular cylinder $\S^1(e_0^{\varepsilon,-1})\times\R$ of constant mean curvature $\varepsilon\H(0)$ and vertical rulings. 

However, if $\kappa=1$, there are two equilibrium points $e_0^{\varepsilon,1}$, each one in $\Theta^1_\varepsilon$. These points also correspond to vertical cylinders, having distance to the axis of rotation $\pi$-complementary, i.e. $e_0^{1,1}+e_0^{-1,1}=\pi$. The equilibrium points are given by
\begin{equation}\label{equis2r}
\begin{array}{lll}
\vspace*{.4cm} e_0^{1,1}=\arctan\left(\displaystyle{\frac{1}{2\H(0)}}\right), &e_0^{-1,1}=\pi-e_0^{1,1}, & \mathrm{if}\ \arctan\left(\displaystyle{\frac{1}{2\H(0)}}\right)\geq 0\\
e_0^{1,1}=\pi+\arctan\left(\displaystyle{\frac{1}{2\H(0)}}\right), &e_0^{-1,1}=\pi+e_0^{1,1}, & \mathrm{if}\ \arctan\left(\displaystyle{\frac{1}{2\H(0)}}\right)\leq 0
\end{array}
\end{equation} 
Notice that $e_0^{1,1}=e_0^{-1,1}$ if and only if $\H(0)=0$.

Two distinct orbits cannot intersect in the phase plane, since it would be a contradiction with the uniqueness of the Cauchy problem. As a consequence, the set of all the possible orbits provide a foliation by \textbf{\emph{regular proper $C^1$ curves}} of $\Theta^\kappa_\varepsilon$ (or $\Theta^\kappa_\varepsilon-e_0^{\varepsilon,\kappa}$ if some $e_0^{\varepsilon,\kappa}$ exists). This properness condition will be applied throughout this paper, and should be interpreted as follows: an orbit $\gamma(s)$ cannot have as endpoint a finite point $(x_0,y_0)\neq e_0^{\varepsilon,\kappa}$ with $x_0\neq 0$ and $y_0\neq\pm 1$, since at these points Equation \eqref{1ordersys} has local existence and uniqueness, and thus any orbit around a point $(x_0,y_0)\in\Theta_\varepsilon^\kappa$ can be extended.

The curve $\Gamma^\kappa_\varepsilon$ and the axis $\{y=0\}$ divide $\Theta^\kappa_\varepsilon$ into connected components, having in common that the coordinates $x(s)$ and $y(s)$ of every orbit are monotonous in each component. In particular, at each of these \emph{monotonicity regions}, the geodesic curvature $k_{\alpha_\kappa}$ of $\alpha_\kappa(s)$ has constant sign. Specifically, by \eqref{curvaprincipales} we have at each $\alpha_\kappa(s)$, $\in J$:
\begin{equation}\label{signk}
\mathrm{sign}(\kappa_1)=\mathrm{sign}(-\varepsilon y'),\ \ \ \ \mathrm{sign}(\kappa_2)=\varepsilon.
\end{equation}

We can view the orbits of system \eqref{1ordersys} locally as graphs $y=y(x)$, wherever  $y\neq 0$. Specifically, we have
\begin{equation}\label{yfuncx}
y\frac{dy}{dx}=\frac{1-y^2}{\tan_\kappa x}-2\varepsilon\H(y)\sqrt{1-y^2}.
\end{equation}
Thus, in each monotonicity region the sign of the quantity $yy'$ is constant. This implies that the signs of $y_0$ and $x_0-\Gamma^\kappa_\varepsilon(y_0)$ (when $\Gamma^\kappa_\varepsilon(y_0)$ exists) determine how the orbit of \eqref{1ordersys} behaves at the point $(x_0,y_0)$. The following lemma summarizes the motion of an orbit $\gamma(s)$ in each monotonicity region. In Figure \ref{fasesh2r} we can see the monotonicity regions in a phase plane, with the behavior of an orbit in each region.
\begin{lem}\label{monotonia}
In the conditions exposed above, the different behaviors in each monotonicity region are described as follows
\begin{enumerate}
\item If $x_0>\Gamma^\kappa_{\varepsilon}(y_0)$ (resp. $x_0<\Gamma^\kappa_{\varepsilon}(y_0)$) and $y_0>0$, then $y(x)$ is strictly decreasing (resp. increasing) at $x_0$. 
\item If $x_0>\Gamma^\kappa_{\varepsilon}(y_0)$ (resp. $x_0<\Gamma^\kappa_{\varepsilon}(y_0)$) and $y_0<0$, then $y(x)$ is strictly increasing (resp. decreasing) at $x_0$.
\item If $y_0=0$, then the orbit passing through $(x_0,0)$ is orthogonal to the $x$ axis.
\item If $x_0=\Gamma^\kappa_{\varepsilon}(y_0)$, then $y'(x_0)=0$ and $y(x)$ has a local extremum at $x_0$.
\end{enumerate}
\end{lem}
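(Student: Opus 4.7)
The plan is to deduce all four cases from a single monotonicity observation about the right-hand side of the system. Set
$$G(x,y) := \frac{1-y^2}{\tan_\kappa x} - 2\varepsilon\H(y)\sqrt{1-y^2},$$
so that the second component of $F_{\kappa,\varepsilon}$ in \eqref{1ordersys} is $y' = G(x,y)$, and, by the very definition \eqref{graga} of $\Gamma^\kappa_\varepsilon$, the curve $\Gamma^\kappa_\varepsilon$ is precisely the zero set $\{G=0\}$ inside the phase plane. Using that $(d/dx)\cot_\kappa x = -1/\sin_\kappa^2 x$ in both signatures $\kappa=\pm1$, one computes $\partial_x G(x,y) = -(1-y^2)/\sin_\kappa^2 x < 0$ for every $y\in(-1,1)$. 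Hence $G(\cdot,y_0)$ is strictly decreasing for every fixed $y_0$, and therefore the sign of $G(x_0,y_0)$ is the opposite of the sign of $x_0-\Gamma^\kappa_\varepsilon(y_0)$ whenever this last quantity is defined.

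For items (1) and (2), away from $\{y=0\}$ the orbit is locally a graph $y=y(x)$ and equation \eqref{yfuncx} reads $y\,dy/dx = G(x,y)$. The sign analysis above then immediately classifies the monotonicity of $y(x)$ according to the signs of $y_0$ and of $x_0-\Gamma^\kappa_\varepsilon(y_0)$, yielding exactly the four sign combinations stated. Item (3) follows from $x'(s_0)=y(s_0)=0$: the tangent vector to the orbit at a point $(x_0,0)$ is $F_{\kappa,\varepsilon}(x_0,0)=(0,G(x_0,0))$, which is vertical in the $(x,y)$-plane; this vector is moreover nonzero because the orbit is assumed to avoid the equilibrium points listed in \eqref{equilibrium}--\eqref{equis2r}, so the crossing with the $x$-axis is genuinely orthogonal. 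For item (4), at a point of $\Gamma^\kappa_\varepsilon$ with $y_0\neq0$ one has $x'=y_0\neq 0$, so the orbit is still locally a graph $y=y(x)$; combining $G(x_0,y_0)=0$ with $y\,dy/dx=G(x,y)$ gives $dy/dx(x_0)=0$, and the strict monotonicity $\partial_x G<0$ forces $dy/dx$ to change sign as $x$ crosses $x_0$, upgrading the critical point to a strict local extremum.

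The only subtlety I foresee is the case $\kappa=1$ near a zero of $\H$, where $\Gamma^1_\varepsilon$ was extended by the convention \eqref{extensions2r} so as to pass through $x=\pi/2$ or $x=\pi$. The inequality $\partial_x G<0$, however, holds throughout $\Theta^1_\varepsilon\setminus\{y=\pm1\}$ irrespective of the branch chosen, and in the regions where $\Gamma^\kappa_\varepsilon(y_0)$ fails to exist, $G(\cdot,y_0)$ simply keeps a constant sign, so the local picture described by items (1)--(4) is unaffected and no separate argument is needed for the glued portions. The core content of the lemma is therefore just the chain rule \eqref{yfuncx} combined with the one-line computation $\partial_x G<0$.
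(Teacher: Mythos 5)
Your argument is correct and follows essentially the same route as the paper, which states this lemma without a separate proof as a summary of the sign analysis of \eqref{yfuncx}: the signs of $y_0$ and of $x_0-\Gamma^\kappa_\varepsilon(y_0)$ determine the sign of $y\,dy/dx$. Your only addition is to make explicit the strict monotonicity $\partial_x G=-(1-y^2)/\sin_\kappa^2 x<0$, which is what justifies reading the sign of the right-hand side of \eqref{1ordersys} off the position of $x_0$ relative to $\Gamma^\kappa_\varepsilon(y_0)$ and which also upgrades the critical point in item (4) to a genuine extremum; this is left implicit in the paper but is exactly the intended reasoning.
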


The following proposition restricts the possible endpoints of an orbit. 

\begin{pro}
No orbit in $\Theta_\varepsilon^\kappa$ can converge at some point	of the form $(0,y)$ with $|y|<1$.
\end{pro}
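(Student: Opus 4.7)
My plan is to derive a contradiction from the blow-up of the vector field $F_{\kappa,\varepsilon}$ along the boundary portion $\{x=0\}\cap\{|y|<1\}$ of $\Theta_\varepsilon^\kappa$. Assume by contradiction that some orbit $\gamma(s)=(x(s),y(s))$ converges to $(0,y_0)$ with $|y_0|<1$ as $s$ approaches an endpoint $s^*$ of its maximal interval of definition. The key observation is that the second component of $F_{\kappa,\varepsilon}$ is dominated, near such a point, by the term $(1-y^2)/\tan_\kappa x$, which diverges to $+\infty$.

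The first step is to extract a quantitative lower bound on $y'$ in a one-sided neighborhood $I=(s_1,s^*)$ (or $(s^*,s_1)$) of $s^*$. Since $|y(s)|\to|y_0|<1$, continuity gives $\eta>0$ with $1-y(s)^2\geq\eta$ on $I$; using $\tan_\kappa x\sim x$ near $0$, one can further arrange $\tan_\kappa x(s)\leq 2x(s)$ on $I$ after shrinking. Since $\mathcal{H}$ is bounded on $[-1,1]$ by some constant $M$, the second equation of \eqref{1ordersys} yields
$$y'(s)\,\geq\,\frac{\eta}{2\,x(s)}\,-\,2M\,\geq\,\frac{\eta}{4\,x(s)}$$
for $x(s)$ small enough. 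In particular $y'(s)>0$ on $I$, so $y$ is strictly increasing there. Combined with $y(s)\to y_0$ and with the constraint that $x'=y$ must have constant sign in order for $x(s)$ to tend monotonically to $0^+$ from within $\{x>0\}$, this forces $y$ to be nonzero throughout $I$, so the orbit is a regular graph $y=y(x)$ over some interval $(0,x_1]$ with $x_1=x(s_1)$.

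In the final step I integrate the lower bound over $I$ and change variables via $ds=dx/y(x)$. On the one hand,
$$\int_{I}y'(s)\,ds\,=\,|\,y_0-y(s_1)\,|\,<\,\infty.$$
On the other hand,
$$\int_{I}\frac{\eta}{4\,x(s)}\,ds\,=\,\int_{0}^{x_1}\frac{\eta}{4\,x\,|y(x)|}\,dx\,\geq\,\frac{\eta}{4}\int_{0}^{x_1}\frac{dx}{x}\,=\,+\infty,$$
since $|y(x)|\leq 1$ on $\Theta_\varepsilon^\kappa$. This is the desired contradiction. The delicate point I expect to spend care on is the graph representation of the orbit near $s^*$, i.e.\ excluding that $y$ passes through $0$ arbitrarily close to $s^*$; this is precisely what the strict positivity of $y'$ coming from the $1/\tan_\kappa x$ blow-up guarantees, and it is also what allows the change of variables $ds=dx/y(x)$ used in the last display.
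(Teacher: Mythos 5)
Your argument is correct, but it is genuinely different from the one in the paper. The paper's proof is geometric: if an orbit limits at $(0,y)$ with $|y|<1$, the corresponding rotational surface is a vertical graph over a punctured disk whose mean curvature extends continuously to the puncture, so by the removable-singularity theorem of Leandro and Rosenberg the graph extends smoothly across the axis; the normal there is then vertical, contradicting $|y|<1$. Your proof instead stays entirely inside the ODE \eqref{1ordersys} and exploits the $1/\tan_\kappa x$ blow-up: since $1-y^2$ is bounded below near the limit point and $\H$ is bounded, $y'\geq \eta/(4x)$ on a one-sided neighborhood of the endpoint, and integrating (after the change of variables $ds=dx/y$, legitimate because the strict positivity of $y'$ together with $x\to 0^+$ forces $y$ to keep a constant nonzero sign there) gives $\int y'\,ds=+\infty$ while $y$ is bounded. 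Your approach has the advantage of being elementary and self-contained, needing no external removable-singularity result; the paper's approach carries extra geometric information (it explains why the orbits that do reach the axis must do so at $(0,\pm 1)$, i.e.\ with vertical normal, which is used elsewhere). The one step you rightly flag as delicate — excluding a zero of $y$ arbitrarily close to the endpoint — does go through: since $y$ is strictly increasing on $I$, a zero of $y$ would make $x$ strictly increasing from that moment on, incompatible with $x(s)\to 0^+$; it would be worth writing this sentence out explicitly, but the argument is sound.
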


\begin{proof}\label{orbitapuntolimite}
Arguing by contradiction, assume that $\gamma^\kappa$ is an orbit in $\Theta_\varepsilon^\kappa$ having a limit point of the form $(0,y)$, $|y|<1$, and let $\alfa_\kappa(s)=(x(s),z(s))$ denote the profile curve of its corresponding rotational $\H$-surface $\sig$. Then, $(x(s_n),x'(s_n))\to (0,y)$ for a sequence of values $s_n$, and in particular $\alfa_\kappa(s)$ approaches the rotation axis in a non-orthogonal way (since $|y|\neq 1$). So, by the monotonicity properties of the phase plane, we see that a piece of $\Sigma$ is a graph $z=u(x_1,x_2)$ in $\m2r$ defined on a punctured ball $\Omega-{\bf \{0\}}$ contained in $\M^2$. Moreover, the mean curvature function of $\Sigma$, viewed as a function $H(x_1,x_2)$ on $\Omega-{\bf \{0\}}$, extends continuously to the puncture, with value $\H(y)$. Hence, it is known that the graph $\Sigma$ extends smoothly to the ball $\Omega$, see e.g. \cite{LeRo}. In particular, the unit normal at the puncture is vertical. This is a contradiction with $|y|<1$.
\end{proof}
Thus, if an orbit converges to the axis $\{x=0\}$, it does to the points $(0,\pm 1)$. Recall that any such an orbit would generate an $\Hs$ intersecting orthogonally the axis of rotation. 

Recall that for any $(x_0,y_0)\in\Theta_\varepsilon^\kappa$, there exists an orbit passing through $(x_0,y_0)$ that is a solution of system \eqref{1ordersys}, as a consequence of Cauchy problem existence and uniqueness. However, Equation \eqref{1ordersys} has a singularity at the points with $x_0=0$, and thus we cannot guarantee the existence of an orbit having as endpoint $(0,\pm 1)$ by means of the Cauchy problem. To prove the existence of such an orbit we take advantage of the work of Gálvez and Mira \cite{GaMi2}, where the authors have studied the existence and symmetries of \emph{Weingarten spheres}\footnote{A \emph{Weingarten sphere} is a topological sphere whose mean curvature $H$, Gauss curvature $K$ and extrinsic curvature $K_e$ satisfy a smooth elliptic relation $\Phi(H,K,K_e)=0$} in homogeneous three-manifolds. Indeed, in Section 4.1, which has a strong interest in itself, they solved the Dirichlet problem for radial solutions of an arbitrary \emph{fully nonlinear elliptic PDE}. The following lemma is a straightforward consequence of the fact that our ODE \eqref{ODE2} is a particular case of this study.

\begin{lem}\label{canoex}
Let be $\hi$ and $\delta=\pm 1$. Then, there exists a disk $\Omega\subset\M^2$ containing the point $(0,0,1)$ and a function $u:\Omega\rightarrow\R$ such that the surface defined by $\sig=\mathrm{graph}(u)$ is an $\H$-surface in $\m2r$, which is rotationally symmetric around the vertical axis $\{(0,0,1)\}\times\R$ and that meets this axis in an orthogonal way at some $p\in\sig$, with unit normal at $p$ given by $\delta \partial_z$.

Moreover, $\sig$ is unique among all the graphical $\H$-surfaces over $\Omega$ with constant Dirichlet data.
\end{lem}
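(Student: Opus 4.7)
The plan is to reduce the problem to an initial value problem at a singular point for a radial ODE, and then invoke the existence and uniqueness result of Gálvez and Mira for radial solutions of fully nonlinear elliptic PDEs \cite{GaMi2}. Up to a vertical translation, which is an ambient isometry for the class of $\H$-surfaces by Lemma \ref{sime}, we may assume that the prescribed point of intersection with the axis is $p = (0,0,1,0)$. We look for $\sig$ as a rotationally symmetric graph of the form $\{(q, v(\rho(q))) : q \in \Omega\}$, where $\rho(q)$ denotes the geodesic distance in $\M^2$ from $(0,0,1)$ to $q$. The requirement that $\sig$ meet the axis orthogonally at $p$ with unit normal $\delta\partial_z$ is equivalent to imposing the initial conditions $v(0) = 0$ and $v'(0) = 0$ together with the choice of orientation sign $\delta = \pm 1$ on the resulting graph.

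Writing the divergence form $\H$-graph equation from Section \ref{sec:2} in geodesic polar coordinates centered at $(0,0,1)$ and applying it to the radial ansatz $u(q) = v(\rho(q))$, one obtains the second order ODE
$$
\left(\frac{\sin_\kappa \rho\, v'}{\sqrt{1+v'^2}}\right)' = 2\delta\sin_\kappa \rho\, \H\!\left(\frac{\delta}{\sqrt{1+v'^2}}\right).
$$
Up to the correspondence $\rho \leftrightarrow x$ provided by the arclength parametrization of the profile curve, this is precisely the restriction of \eqref{ODE2} (equivalently, the projection of an orbit of \eqref{1ordersys}) to solutions with angle function $\delta$ at the axis. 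The factor $\sin_\kappa \rho$ makes the equation singular precisely at $\rho = 0$, which is where we wish to prescribe the Cauchy data, so standard Picard--Lindelöf theory does not apply.

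This initial value problem at a singular point fits into the framework of Section 4.1 of \cite{GaMi2}, where Gálvez and Mira solve the Dirichlet problem for radial solutions of an arbitrary fully nonlinear elliptic PDE arising in the theory of Weingarten spheres in homogeneous three-manifolds. Our ODE is a particular instance of that class: it is quasilinear and elliptic (ellipticity being encoded in the non-degeneracy of the phase portrait of \eqref{1ordersys} already exploited in Section \ref{analisisfases}), and the regularity assumption $\H \in C^1([-1,1])$ supplies the $C^1$ dependence required by their theorem. Consequently, there exist a maximal $r_0 > 0$ and a $C^2$ function $v : [0, r_0) \to \R$ solving the ODE with $v(0) = v'(0) = 0$; moreover, $v$ is unique among radial solutions with this Dirichlet data.

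Setting $\Omega$ to be the open geodesic disk of radius $r_0$ centered at $(0,0,1)$ in $\M^2$ and $u(q) := v(\rho(q))$, the graph $\sig := \mathrm{graph}(u)$ is the sought-after rotationally symmetric $\H$-surface through $p$, with unit normal $\delta\partial_z$ at $p$. The main delicate point of the whole argument is precisely the singularity of the ODE at $\rho = 0$, which also explains why its solutions must be constructed by this indirect route rather than from within the phase plane machinery of Section \ref{analisisfases}; invoking \cite{GaMi2} is what allows us to bypass this obstacle and obtain both existence and uniqueness in a single stroke.
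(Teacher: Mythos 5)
Your proposal is correct and follows exactly the route the paper takes: the paper gives no separate proof of this lemma, deriving it directly from the solution of the singular radial Dirichlet problem in Section 4.1 of \cite{GaMi2}, which is precisely the result you invoke after writing out the radial ODE. Your explicit derivation of the divergence-form equation in geodesic polar coordinates and the identification of the singularity at $\rho=0$ is a faithful (and slightly more detailed) elaboration of the paper's one-line justification.
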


This lemma has the following implication in the phase plane $\Theta^\kappa_\varepsilon$.

\begin{cor}\label{ejefase}
Assume that $\H(\delta)\neq 0$ for $\delta \in \{-1,1\}$, and consider $\varepsilon\in \{-1,1\}$ such that $\varepsilon \H(\delta)>0$. Then, there exists a unique orbit in $\Theta^\kappa_{\varepsilon}$ that has $(0,\delta)\in \overline{\Theta^\kappa_{\varepsilon}}$ as an endpoint. There is no such an orbit in $\Theta^\kappa_{-\varepsilon}$.
\end{cor}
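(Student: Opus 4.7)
The plan is to use Lemma \ref{canoex} to identify rotational $\H$-surfaces meeting the vertical axis orthogonally with unit normal $\delta\partial_z$ with orbits of \eqref{1ordersys} having $(0,\delta)$ as an endpoint, and to determine which half-plane $\Theta^\kappa_\varepsilon$ contains the canonical orbit by a Taylor expansion of the profile curve near the apex.

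For existence, I apply Lemma \ref{canoex} to obtain a rotational $\H$-surface $\Sigma_\delta$ whose profile curve $\alpha_\kappa(s)=(x(s),z(s))$, parametrized by arclength, satisfies $x(s_0)=0$, $x'(s_0)=\delta$ and $z'(s_0)=0$ at its apex (the first two because the angle function $\nu=x'$ equals $\delta$ there). Differentiating $x'^2+z'^2=1$ at $s_0$ gives $x''(s_0)=0$. Substituting into the profile equation \eqref{ODE1} and evaluating the indeterminate limit $\lim_{s\to s_0}z'\cot_\kappa x=z''(s_0)/\delta$ by Taylor (using $z'(s)\approx z''(s_0)(s-s_0)$ and $x(s)\approx \delta(s-s_0)$) yields $2\H(\delta)=2\delta z''(s_0)$, hence $z''(s_0)=\delta\,\H(\delta)$. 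Moving away from the apex in the direction where $x>0$, that is, where $s-s_0$ has the sign of $\delta$, we find $\mathrm{sign}(z'(s))=\mathrm{sign}(z''(s_0)(s-s_0))=\mathrm{sign}(\delta\,z''(s_0))=\mathrm{sign}(\H(\delta))$. Therefore the associated orbit lies in $\Theta^\kappa_\varepsilon$ with $\varepsilon=\mathrm{sign}(\H(\delta))$, which satisfies $\varepsilon\,\H(\delta)>0$, and has $(0,\delta)$ as an endpoint.

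For uniqueness in $\Theta^\kappa_\varepsilon$, any orbit with endpoint $(0,\delta)$ generates a rotational $\H$-surface meeting the axis orthogonally with normal $\delta\partial_z$; since its normal is vertical at the apex, such a surface is a vertical graph over some disk of $\M^2$ with constant Dirichlet data, and the uniqueness clause of Lemma \ref{canoex} forces it to coincide with $\Sigma_\delta$ near the apex. The two profile curves then agree on a neighborhood of the apex, and because the orbits form a regular proper $C^1$ foliation of $\Theta^\kappa_\varepsilon$, they must coincide globally. For non-existence in $\Theta^\kappa_{-\varepsilon}$, the same construction would again produce a rotational $\H$-surface agreeing with $\Sigma_\delta$ near the apex, whose orbit, as computed above, lies in $\Theta^\kappa_\varepsilon$ and not in $\Theta^\kappa_{-\varepsilon}$, a contradiction. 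The main technical step is the Taylor-expansion sign computation identifying $\varepsilon=\mathrm{sign}(\H(\delta))$; the rest is a direct translation through Lemma \ref{canoex}.
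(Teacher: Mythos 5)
Your proposal is correct and follows essentially the same route as the paper: existence and uniqueness both come from Lemma \ref{canoex}, and the placement of the orbit in $\Theta^\kappa_\varepsilon$ with $\varepsilon\H(\delta)>0$ is decided by the local behaviour of the profile curve at the apex. The only cosmetic difference is that you compute $z''(s_0)=\delta\,\H(\delta)$ explicitly by Taylor expansion of \eqref{ODE1}, whereas the paper reads off the sign of $z'$ from the mean curvature comparison theorem together with \eqref{curvaprincipales} and \eqref{signk}; the two computations are equivalent.
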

\begin{proof}
Let $\Sigma$ be the rotational $\H$-surface given for $\delta$ by Lemma \ref{canoex}. Let $\alpha_\kappa(s)=(x(s),z(s))$ be the profile curve of $\Sigma$, defined for $s\in [0,s_0)$ or $s\in (-s_0,0]$ depending on the orientation chosen on $\alpha_\kappa$, and assume that $\alpha_\kappa(0)$ corresponds to the point $p_0$ of orthogonal intersection of $\Sigma$ with its rotation axis. The mean curvature comparison theorem ensures us that all the principal curvatures of $\Sigma$ at $p_0$ have the same sign as $\H(\delta)$.

By \eqref{curvaprincipales} the geodesic curvature of $\alpha_\kappa(s)$ at $s=0$ is non-zero, and thus the sign of $z'(s)$ is constant for $s$ small enough. It follows then by \eqref{signk} that $\varepsilon \H(\delta)>0$. Consequently, the profile curve $\alpha_\kappa(s)$ generates an orbit in the phase plane $\Theta^\kappa_{\varepsilon}$ with $(0,\delta)$ as an endpoint. It is also clear from this argument that such an orbit cannot exist in $\Theta^\kappa_{-\varepsilon}$, because of the condition $\varepsilon \H(\delta)>0$.
\end{proof}

\subsection{Necessary conditions for the existence of $\H$-spheres}\label{necesariaesfera}
\vspace{-.3cm}

Once we have introduced the phase plane and analyzed the behavior of its solutions, we derive some necessary conditions for the existence of rotational $\H$-spheres. We emphasize again that for sphere we mean \emph{any} immersed (possibly with self-intersections), closed surface of genus zero.

The first result concerns the value of the mean curvature of a closed $\Hs$ immersed in $\m2r$, not necessarily rotational, in its points with largest and lowest height, and the implications that this fact has in the prescribed function $\H$.

\begin{pro}\label{m2rrmm1}
Let be $\hi$, and suppose that there exists a closed $\H$-surface $\mathcal{K}_\H$ in $\m2r$. Then,
\begin{itemize}
\item[1.] If $\mathcal{K}_\H\subset\h2r$, then $\H(-1)\H(1)>0$.
\item[2.] If $\mathcal{K}_\H\subset\s2r$, one of the following items holds:
\begin{itemize}
\item $\H(-1)\H(1)>0$.
\item $\H(-1)\H(1)=0$, and $\mathcal{K}_\H$ is a horizontal plane $\S^2\times\{t_0\}$, for some $t_0\in\R$.
\end{itemize}
\end{itemize}
\end{pro}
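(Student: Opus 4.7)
The plan is to combine two ingredients: a classical geometric comparison argument at the height extrema of $\mathcal{K}_\H$, and the $\H$-surface maximum principle encoded in Lemma \ref{pmax}.

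Since $\mathcal{K}_\H$ is closed, it is compact, so the height function $z$ attains a global maximum at some $p^+\in\mathcal{K}_\H$ and a global minimum at some $p^-\in\mathcal{K}_\H$. At both extrema the tangent plane is horizontal, so the chosen continuous unit normal $\eta$ on $\mathcal{K}_\H$ satisfies $\eta(p^\pm)=\delta^\pm\partial_z$ with $\delta^\pm:=\nu(p^\pm)\in\{-1,1\}$, and Definition \ref{defiHsup} gives $H_{\mathcal{K}_\H}(p^\pm)=\H(\delta^\pm)$. Moreover, $\mathcal{K}_\H$ lies locally below the horizontal tangent plane $P^+=\M^2\times\{z(p^+)\}$ and locally above $P^-=\M^2\times\{z(p^-)\}$, and both planes are totally geodesic with vanishing mean curvature.

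The next step is to apply the geometric maximum principle for surfaces tangent at a point with matching normals, comparing $\mathcal{K}_\H$ with $P^\pm$ after orienting each $P^\pm$ by $\delta^\pm\partial_z$. Going through the four sign combinations of $(\delta^+,\delta^-)$ yields a clean dichotomy: when $\delta^+=-\delta^-$ the two comparisons together produce $\H(1)\H(-1)\ge 0$; when $\delta^+=\delta^-=:\delta$ the two inequalities have opposite signs and force $\H(\delta)=0$. In either case one obtains at least the weak inequality $\H(1)\H(-1)\ge 0$, and the equality case always reduces to $\H(\delta^\pm)=0$ at one of the two extrema.

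To rule out, or to identify, the equality case, I would invoke Lemma \ref{pmax}. Whenever $\H(\delta^\pm)=0$, the plane $P^\pm$ oriented by $\delta^\pm\partial_z$ has constant angle function $\delta^\pm$ and satisfies $H_{P^\pm}=0=\H(\delta^\pm)$, so it is itself an $\H$-surface for the same $\H$. Since $\mathcal{K}_\H$ and $P^\pm$ are tangent at $p^\pm$ with coinciding unit normals and $\mathcal{K}_\H$ lies on one side of $P^\pm$, Lemma \ref{pmax} forces $\mathcal{K}_\H=P^\pm$, that is, $\mathcal{K}_\H$ is a horizontal plane. In $\h2r$ horizontal planes are non-compact, contradicting the compactness of $\mathcal{K}_\H$ and thus ruling out every equality case; this upgrades the conclusion to $\H(1)\H(-1)>0$, proving item (1). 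In $\s2r$, by contrast, the horizontal plane $\S^2\times\{t_0\}$ is a legitimate closed $\H$-surface, giving precisely the exceptional possibility $\H(1)\H(-1)=0$ with $\mathcal{K}_\H=\S^2\times\{t_0\}$ stated in item (2).

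The main point of care in this scheme is combinatorial rather than analytic: since $\mathcal{K}_\H$ is only immersed, the signs $\delta^+$ and $\delta^-$ are a priori independent, and one must track all four orientation combinations at $p^+$ and $p^-$ together with the induced signs in the comparison principle. The decisive technical input is Lemma \ref{pmax}, which upgrades a single one-sided tangency into global coincidence as soon as the comparison surface happens to be an $\H$-surface for the same prescribed $\H$.
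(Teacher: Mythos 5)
Your argument is correct and is essentially the paper's own proof: the paper locates the same two tangency points by sweeping the foliation of horizontal planes $\M^2\times\{t\}$ upward (first and last contact are exactly your height extrema $p^-$ and $p^+$), applies the mean curvature comparison with these totally geodesic planes to get the sign conditions on $\H(\pm1)$, and invokes Lemma \ref{pmax} in the degenerate case to force $\mathcal{K}_\H=\M^2\times\{t_0\}$, which is impossible in $\h2r$ and yields the minimal horizontal sphere in $\s2r$. Your explicit four-case bookkeeping of $(\delta^+,\delta^-)$ is a slightly more systematic presentation of the orientation reductions the paper handles by ``after a change of orientation,'' but the substance is identical.
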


\begin{proof}
Let be $\mathcal{K}_\H$ a closed $\Hs$ and $\eta$ its unit normal vector field. Let be $p,q\in \mathcal{K}_\H$ the points of $\mathcal{K}_\H$ with lowest height and largest height, respectively, and consider the foliation of $\m2r$ by horizontal planes $\M^2_t=\M^2\times\{t\},\ t\in\R$. Notice that we can change the orientation of each element of this 1-parameter family without changing the value of the mean curvature, as it vanishes identically. Take some $\M^2_t$ and move it by vertical translations by decreasing $t$, until $\M^2_t\cap \mathcal{K}_\H=\varnothing$. Then, we move $\M^2_t$ towards $\mathcal{K}_\H$ by increasing $t$ until we reach at some instant $t_0$ a first contact point $p_0$ with $\mathcal{K}_\H$. This point is necessary an interior one, since both surfaces have no boundary. 

First, suppose that $\H(1)\H(-1)\neq 0$. Assume moreover that $\eta_p=\partial_z$, since the case $\eta_p=-\partial_z$ is proven similarly after a change of the orientation. As $\mathcal{K}_\H$ lies above $\M^2_{t_0}$ around $p$, the mean curvature comparison theorem ensures us that $0< H_{\mathcal{K}_\H}(p)=\H(\langle\eta_p,\partial_z\rangle)=\H(1)$. Now keep moving $\M^2_t$ upwards by increasing $t$ until we reach a last instant $t_1>t_0$ where $\mathcal{K}_\H$ and $\M^2_{t_1}$ intersect for the last time in a tangent point $p_1$. Note that if $\eta_q=\partial_z$, then we would have $H_{\mathcal{K}_\H}(q)=\H(1)> 0$, but this would contradict the mean curvature comparison principle since $\M^2_{t_1}$ is a minimal surface lying above $\mathcal{K}_\H$ around $q$. Thus, necessarily we have $\eta_q=-\partial_z$ and again the mean curvature comparison principle ensures us that $\H(-1)>0$, and the first item holds. 

Notice that we have proven implicitly that in a closed surface $\mathcal{K}_\H$, the unit normals $\eta_p$ and $\eta_q$ of the points $p$ and $q$ with largest and lowest height, respectively, are vertical and opposite. 

Now, suppose that $\H(-1)\H(1)=0$, and without losing generality assume that $\H(1)=0$. Then, either $\eta_p$ or $\eta_q$ is the vertical vector $\partial_z$, say $\eta_p$. In this situation, the horizontal plane $\M^2_{t_0}$ is tangent at $p$, where both unit normals agree. According with the maximum principle for $\Hss$, see Lemma \ref{pmax}, $\mathcal{K}_\H$ should agree with $\M^2_{t_0}$, and in particular $\H$ would vanish identically. In $\h2r$ this is a contradiction, since horizontal planes are not closed surfaces. In $\s2r$ this implies that $\mathcal{K}_\H$ agrees with $\S^2\times\{t_0\}$, which is a closed, minimal surface. This proves Proposition \ref{m2rrmm1}.
\end{proof}

Now, we derive a necessary condition on the prescribed function $\H$ for the existence of a rotational $\H$-sphere in the space $\h2r$. Notice that some hypothesis on $\H$ is needed, since in $\h2r$ there exists a sphere with constant mean curvature equal to $H_0$ if and only if $H_0>1/2$. The value $1/2$ is known as \emph{critical} and, in fact, it is optimal; for $H_0=1/2$ there exists a rotational, entire vertical graph in $\h2r$, incapacitating the existence of a sphere with constant mean curvature equal to $1/2$. The next proposition generalizes this necessary fact to the class of rotational $\Hss$.

\begin{pro}\label{condesferah2r}
Let be $\H\in C^1([-1,1])$ such that there exists a rotational $\H$-sphere $S_\H$ in $\h2r$. Then $2|\H(y)|>\sqrt{1-y^2}$ for every $y\in[-1,1]$. In particular, $\H$ never vanishes.
\end{pro}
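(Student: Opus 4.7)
The strategy is a phase plane analysis: if the inequality fails at some $y^*\in(-1,1)$, the vector field of \eqref{1ordersys} will point strictly upward on the entire horizontal segment $\{y=y^*\}\cap\Theta_\varepsilon^{-1}$, obstructing any orbit that must connect the two limit endpoints $(0,1)$ and $(0,-1)$.

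To set up the orbit, Proposition~\ref{m2rrmm1} gives $\H(1)\H(-1)>0$, and Corollary~\ref{ejefase} then places the orbit $\gamma(s)=(x(s),y(s))$ associated to $S_\H$ entirely in one phase plane $\Theta_\varepsilon^{-1}$, where $\varepsilon\in\{-1,1\}$ denotes the common sign of $\H(\pm 1)$, and both $(0,1)$ and $(0,-1)$ appear as limit endpoints of $\gamma$. I parametrize so that $y(s)\to 1$ as $s\to s_-$ and $y(s)\to -1$ as $s\to s_+$, with $s\in(s_-,s_+)$. The boundary values $y=\pm 1$ of the target inequality are immediate, since $\varepsilon\H(\pm 1)>0$ already forces $\H(\pm 1)\neq 0$.

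For the main step, suppose for contradiction that $2|\H(y^*)|\leq\sqrt{1-y^{*2}}$ for some $y^*\in(-1,1)$. Rewriting the second component of \eqref{1ordersys} as
\[
y'=\sqrt{1-y^2}\left(\frac{\sqrt{1-y^2}}{\tanh x}-2\varepsilon\H(y)\right),
\]
the bound $2\varepsilon\H(y^*)\leq 2|\H(y^*)|\leq\sqrt{1-y^{*2}}$ together with $\tanh x<1$ for $x>0$ yields
\[
y'\big|_{y=y^*}\;\geq\;(1-y^{*2})\left(\frac{1}{\tanh x}-1\right)>0
\]
throughout the segment $\{y=y^*\}\cap\Theta_\varepsilon^{-1}$. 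I then set $s_0:=\inf\{s\in(s_-,s_+):y(s)\leq y^*\}$; continuity together with the prescribed limits at $s_\pm$ give $s_0\in(s_-,s_+)$ with $y(s_0)=y^*$ and $y(s)>y^*$ for every $s<s_0$. Taking a left derivative produces $(dy/ds)(s_0)\leq 0$, contradicting $(dy/ds)(s_0)=y'(x(s_0),y^*)>0$ dictated by the ODE. Hence $2|\H(y)|>\sqrt{1-y^2}$ on all of $[-1,1]$, and this inequality in particular forbids $\H$ from vanishing.

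The delicate point is the uniform positivity of $y'$ along $\{y=y^*\}$: the hypothesis is designed precisely to dominate the sign-ambiguous term $2\varepsilon\H(y^*)$ by $\sqrt{1-y^{*2}}$, which is what makes the sign of $y'$ uniform in $x$ and thereby bypasses the need for any information on the location of $\gamma$ relative to the equilibrium curve $\Gamma_\varepsilon^{-1}$.
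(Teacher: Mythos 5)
Your proof is correct, and it takes a genuinely different route from the paper's. The paper works with the nullcline $\Gamma_1^{-1}$: if the inequality fails, that curve acquires an asymptote at the largest root $y_0$ of $2\H(y)=\sqrt{1-y^2}$, and a case analysis on the sign of $y_0$, combined with properness and the monotonicity regions, forces the orbit issuing from $(0,1)$ to converge either to the line $\{y=y_0\}$ (producing an entire convex graph) or to the equilibrium $e_0^{1,-1}$ (producing a cylinder), contradicting compactness either way. Your barrier argument is shorter and bypasses the nullcline entirely: the single estimate $y'\geq(1-y^{*2})\left(\coth x-1\right)>0$ along $\{y=y^*\}$, uniform in $x>0$, makes that line impassable for downward crossings, and the first-crossing-time argument is sound. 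One caution on your setup: Corollary \ref{ejefase} only locates the two orbit germs emanating from $(0,\pm 1)$; it does not say the whole profile curve is a single orbit in a single $\Theta_\varepsilon^{-1}$ (that identification is essentially Theorem \ref{teoes}, whose proof cites the present proposition, so leaning on it here risks circularity). Fortunately your estimate does not need it: since $2\varepsilon\H(y^*)\leq 2|\H(y^*)|$ for either sign of $\varepsilon$, the barrier holds in both phase planes at once, so you can simply apply the intermediate value theorem to $y(s)=x'(s)$ along the full profile curve from pole to pole and invoke the ODE only at the crossing time $s_0$, where $|y(s_0)|=|y^*|<1$ guarantees $z'(s_0)\neq 0$ and $x(s_0)>0$. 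What the paper's longer route buys in exchange is the identification of the limiting objects (entire graphs, cylinders) obstructing compactness, a picture that is reused in the Delaunay classification; your argument proves the proposition while discarding that information.
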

\begin{proof}
Because $S_\H$ is closed, Proposition \ref{m2rrmm1} asserts that $\H(\pm 1)\neq 0$. We suppose that $\H(1)>0$, since the case $\H(1)<0$ is proven similarly.

Let $p,q$ the points of intersection of $S_\H$, with the axis of rotation, and suppose that $p_3<q_3$, where $x_3$ stands for the height of a point $x\in\m2r$. Let $\eta$ be the unit normal of $S_\H$. Then, it is clear that $\eta_p$ and $\eta_q$ are both vertical vectors, i.e. they point in the $\partial_z$ or $-\partial_z$ direction. By the mean curvature comparison principle and by Proposition \ref{m2rrmm1}, we have that $\eta_p=\partial_z$ and $\eta_q=-\partial_z$. In particular, we have that $\H(-1)>0$ also holds.

Suppose that the axis of rotation of $S_\H$ is the vertical line passing through the origin. Then, $S_\H$ is generated by the rotation of an arc-length parametrized curve as in Equation \eqref{parametrot}, and generates an orbit $\gamma(s)=(x(s),y(s))$ in $\Theta_1^{-1}$ having $(0,\pm 1)$ as endpoints, which correspond to the points $p,q$ of intersection of $S_\H$ with the axis of rotation.

Now, as $\H(1)>0$, at the point $y=1$ it is clear that the inequality $2\H(y)>\sqrt{1-y^2}$ holds. By continuity, for $y$ close enough to $y=1$ the function $\H(y)$ is positive. If the inequality $2\H(y)>\sqrt{1-y^2}$ fails to hold, let $y_0$ be the largest value in $(-1,1)$ such that $2\H(y_0)=\sqrt{1-y_0^2}$. Note that by continuity $\H(y_0)$ must be positive. Then, the horizontal graph $\Gamma_1^{-1}(y)$ given by \eqref{graga} has the point $(0,1)$ as endpoint and the line $\{y=y_0\}$ as an asymptote. Now, two possibilities can occur for $\Gamma_1^{-1}$ depending on the sign of $y_0$:
\begin{itemize}
\item The point $y_0$ satisfies $y_0\geq 0$. Then, the curve $\Gamma_1^{-1}$ is strictly contained in the half-strip $[0,\infty)\times (y_0,1]$. By properness and by the monotonicity properties in $\Theta_1^{-1}$, $\gamma(s)$ converges also to $\{y=y_0\}$, generating an entire, strictly convex graph and contradicting the compactness of $S_\H$.

\item The point $y_0$ satisfies $y_0<0$. Then,  the curve $\Gamma_1^{-1}$ intersects the axis $\{y=0\}$ and the equilibrium $e_0^{1,-1}$ exists, see Equation \eqref{equilibrium}. Again, by properness and the monotonicity properties of the phase plane $\Theta_1^{-1}$, the orbit $\gamma(s)$ must converge to $e_0^{1,-1}$, and thus the surface $S_\H$ should converge to a vertical cylinder, contradicting again the compactness of $S_\H$.
\end{itemize}
In any case, if $2\H(y)>\sqrt{1-y^2}$ fails to hold, we arrive to a contradiction.

This proves Proposition \ref{condesferah2r} for the case that $\H(1)>0$. If $\H(1)<0$, then we would arrive to $2\H(y)<-\sqrt{1-y^2}$; note that this condition is just $2\H(y)>\sqrt{1-y^2}$ after a change of the orientation in $S_\H$, and thus its proof is similar. This completes the proof of Proposition \ref{condesferah2r}.
\end{proof}

\begin{obs}
It is clear that for the particular choice $\H=H_0\in\R^+$, the condition  $2\H(y)>\sqrt{1-y^2},\ \forall y\in (-1,1)$ is just that $H_0$ has to be greater than $1/2$.
\end{obs}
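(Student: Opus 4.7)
The observation is essentially an immediate consequence of specializing the inequality $2|\H(y)|>\sqrt{1-y^2}$ from Proposition \ref{condesferah2r} to the case of a positive constant $\H\equiv H_0$, so the proposal is just to carry out this specialization and identify the supremum of the right-hand side.

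My plan is as follows. First I would substitute $\H(y)=H_0$ into the pointwise condition, which (since $H_0>0$) reduces to the requirement
\begin{equation*}
2H_0>\sqrt{1-y^2}\quad\text{for every } y\in(-1,1).
\end{equation*}
Since $2H_0$ is independent of $y$, this is equivalent to the single scalar inequality
\begin{equation*}
2H_0\geq \sup_{y\in(-1,1)}\sqrt{1-y^2}.
\end{equation*}
Next, I would observe that the function $y\mapsto\sqrt{1-y^2}$ attains its maximum on $[-1,1]$ at $y=0$ with value $1$, so the supremum on the open interval $(-1,1)$ equals $1$. Because the value $1$ is actually attained at the interior point $y=0$, the strict inequality at $y=0$ forces $2H_0>1$, i.e. $H_0>1/2$. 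Conversely, if $H_0>1/2$ then $2H_0>1\geq\sqrt{1-y^2}$ for all $y\in(-1,1)$, giving the desired pointwise strict inequality.

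There is no substantive obstacle here: the argument is a one-line computation of the maximum of $\sqrt{1-y^2}$ and a comparison with the constant $2H_0$. The only point worth checking carefully is that the supremum is attained inside $(-1,1)$ (at $y=0$), so that the universal strict inequality on the open interval genuinely forces the scalar strict inequality $H_0>1/2$, rather than only the weak inequality $H_0\geq 1/2$.
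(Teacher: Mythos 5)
Your proposal is correct and matches the paper, which offers no separate argument because it treats the observation as immediate: substituting the constant $H_0>0$ and noting that $\sup_{y\in(-1,1)}\sqrt{1-y^2}=1$ is attained at $y=0$ is exactly the intended (and only) reasoning. Your care in noting that attainment of the supremum at the interior point $y=0$ is what forces the strict inequality $H_0>1/2$, rather than merely $H_0\geq 1/2$, is the right point to check.
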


In $\s2r$ we know that there exist rotational, compact, minimal surfaces; for instance, the horizontal planes $\S^2\times\{t_0\},\ t_0\in\R$ are surfaces satisfying these hypothesis. Thus, the condition $\H>0$ is no longer a necessary one for the existence of $\H$-spheres in $\s2r$. However, we give a necessary condition on the multiplicity of the zeros of $\H$. For that, recall that we suppose that $\H$ has finite zeros of finite multiplicity.

\begin{pro}\label{condesferas2r}
Let be $\hi$, and suppose that there exists an $\H$-sphere $S_\H$ in $\s2r$. Let be $z_1,...,z_n$ the zeros of $\H$, and denote by $m(z_i)$ the multiplicity of each $z_i,\ i=1,...,n$. Then, $\sum_{i=1}^n m(z_i)$ is even.
\end{pro}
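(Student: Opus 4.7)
The plan is to reduce the statement to an elementary sign-change count for the real-valued $C^1$ function $\H$, using Proposition \ref{m2rrmm1} as the essential geometric input.

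First, I would apply Proposition \ref{m2rrmm1} to the closed $\H$-surface $S_\H\subset \s2r$. This gives two alternatives: either $\H(-1)\H(1)>0$, or $S_\H$ is a horizontal plane $\S^2\times\{t_0\}$. The horizontal plane case is degenerate (the angle function is constantly $\pm 1$, so only the vanishing of $\H$ at one endpoint is felt by $S_\H$) and should be excluded from this statement, as is implicit in the geometric setting of genuine rotational $\H$-spheres. I therefore work under the assumption that $S_\H$ is not a horizontal plane, so that both $\H(-1)$ and $\H(1)$ are nonzero and of the \emph{same sign}. In particular, every zero $z_i$ of $\H$ lies in the open interval $(-1,1)$.

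Next, I would translate the parity condition on $\sum m(z_i)$ into a count of sign changes. Since $\H\in C^1([-1,1])$ has isolated zeros of finite multiplicity, near each zero $z_i$ one can write
$$
\H(y)=(y-z_i)^{m(z_i)}g_i(y),
$$
with $g_i$ continuous and $g_i(z_i)\neq 0$. Hence $\H$ changes sign across $z_i$ if and only if $m(z_i)$ is odd, and does not change sign if $m(z_i)$ is even. Because $\H(-1)$ and $\H(1)$ have the same sign, the total number of sign changes of $\H$ on $[-1,1]$ must be even; equivalently, the number of zeros $z_i$ of odd multiplicity is even.

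Finally, I would conclude by a parity computation. Split
$$
\sum_{i=1}^n m(z_i)=\sum_{m(z_i)\ \mathrm{even}}m(z_i)+\sum_{m(z_i)\ \mathrm{odd}}m(z_i).
$$
The first sum is trivially even, while the second sum is a sum of an even number of odd integers, hence even. Therefore $\sum_{i=1}^n m(z_i)$ is even, as claimed. The only conceptual subtlety in the proof is the careful exclusion of the horizontal plane case, for which the multiplicities of the endpoint zero(s) of $\H$ are entirely unconstrained by the existence of $S_\H$; everything else is a straightforward consequence of Proposition \ref{m2rrmm1} combined with the intermediate value theorem.
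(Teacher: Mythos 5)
Your proof is correct, and it takes a genuinely more elementary route than the paper. Both arguments begin identically: Proposition \ref{m2rrmm1} forces either $\H(-1)\H(1)>0$ or the degenerate horizontal-plane case (which you rightly flag as a case that must be excluded --- note that the paper dismisses it by asserting a contradiction with the isolatedness of the zeros, which is not really more rigorous than your explicit exclusion, since e.g.\ $\H(y)=1-y$ admits $\S^2\times\{t_0\}$ as an $\H$-sphere while $\sum m(z_i)=1$). From that point on, however, the paper argues by contradiction inside the phase plane $\Theta_1^1$: if $\sum m(z_i)$ were odd, the extension rule \eqref{extensions2r} --- which encodes exactly your observation that $\H$ changes sign precisely at its odd-multiplicity zeros --- would force the curve $\Gamma_1^1$ to terminate at $(\pi,-1)$ instead of $(0,-1)$, and the monotonicity properties would then drive the sphere's orbit into the equilibrium $e_0^{1,1}$, contradicting compactness. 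You instead notice that once $\H(-1)\H(1)>0$ is in hand, the conclusion is a purely one-dimensional parity count: the number of sign changes of $\H$ on $[-1,1]$ is even, sign changes occur exactly at odd-multiplicity zeros, and $\sum m(z_i)$ has the same parity as the number of odd-multiplicity zeros. This bypasses the phase plane entirely and is shorter and cleaner; what the paper's version buys in exchange is an explicit description of where $\Gamma_1^1$ ends up, which is the form of the statement actually reused later in the construction of the $\H$-sphere in Theorem \ref{existenciaesfera}.
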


\begin{proof}
As $S_\H$ is closed, by Proposition \ref{m2rrmm1} we can suppose that $\H(-1)\H(1)>0$. Indeed, if $\H(-1)$ or $\H(1)$ were equal to zero, then $S_\H$ would be a minimal horizontal plane $\S^2\times\{t_0\}$, for some $t_0\in\R$, contradicting that $\H$ has isolated zeros. After a change of the orientation, we suppose that $\H(-1)$ and $\H(1)$ are both positive.

Arguing by contradiction, suppose that $\sum_{i=1}^n m(z_i)$ is odd. Let us denote by $z_1,...,z_{k+1}$ the zeros of $\H$ with odd multiplicity and by $z_{k+2},...,z_n$ the zeros of $\H$ with even multiplicity. Since we suppose that $\sum_{i=1}^n m(z_i)$ is odd, then $k+1$ must be odd as well, and thus $k=2q$ for some $q\in\N$. 

By Equation \eqref{graga}, the curve $\Gamma_1^1$ in the phase plane $\Theta_1^1$ has the point $(0,1)$ as endpoint. By Equation \eqref{extensions2r}, we have that the curve $\Gamma_1^1$ must have the point $(\pi,-1)$ as its other endpoint. For that, we should note that $\H$ does not change it sign at the zeros $z_{k+2},...,z_n$ with even multiplicity, and in the zeros $z_1,...,z_{k+1}$ with odd multiplicity it changes it sign. Since there are an odd number of zeros with odd multiplicity, the last extension of $\Gamma_1^1$ is given by $\pi+\Gamma_1^1(y)$, which takes the value $\pi$ at $y=-1$.

Let $\gamma_0$ be the orbit in $\Theta_1^1$ associated to the $\H$-sphere $S_\H$. Let be $p,q\in S_\H$ the points of intersection of $S_\H$ with the axis of rotation, and such that $p_3<q_3$. On the one hand, the orbit $\gamma_0$ has its endpoints at the points $(0,1)$ and $(0,-1)$; these points correspond to the points $p$ and $q$, respectively. On the other hand, if we start at the point $(0,1)$, the monotonicity properties of the phase plane would yield that $\gamma_0$ should converge to the equilibrium $e_0^{1,1}$, contradicting the compactness of $S_\H$. In particular, $\gamma_0$ would never reach its endpoint $(0,-1)$. This contradiction proves Proposition \ref{condesferas2r}.
\end{proof}

The last result shows that the angle function of a rotational $\H$-sphere is a monotonous function. This fact will allow us to state that rotational $\H$-spheres are unique in the Hopf sense, according to Gálvez and Mira uniqueness theorem \cite{GaMi1}.

\begin{teo}\label{teoes}
Let be $\hi$ and suppose that $S_\H$ is a rotationally symmetric $\H$-sphere in $\m2r$. Then, the angle function of $S_\H$ is a monotonous function. 
\end{teo}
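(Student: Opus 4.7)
The plan is a phase-plane contradiction argument built around one sharp computation at interior critical points of the angle function. First I would set up the orbit $\gamma(s)=(x(s),y(s))$ of $S_\H$ in $\Theta_\varepsilon^\kappa$, where $y=x'$ is the angle function and $s$ is arc-length along the profile curve. Proposition \ref{m2rrmm1} (vertical normals at the extremal-height points) and Corollary \ref{ejefase} let me assume, after a possible reversal of orientation, that $\H(\pm 1)>0$, $\varepsilon=1$, and $\gamma(0)=(0,1)$, $\gamma(L)=(0,-1)$. Smoothness of $S_\H$ at the two poles yields the expansion $y(s)=1-\tfrac{1}{2}\H(1)^2 s^2+O(s^3)$ near $s=0$ and the analogous one at $s=L$, so $y'<0$ just after $s=0$ and just before $s=L$.

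The crucial input is an identity for $y''$ at interior critical points. Differentiating the second component of \eqref{1ordersys} and substituting $x'=y$ and $y'(s_0)=0$, all terms involving $\H$ or $\H'$ drop out and one obtains
$$y''(s_0)=-\frac{(1-y(s_0)^2)\,y(s_0)}{\sin_\kappa^2 x(s_0)}.$$
Hence any interior critical point with $y(s_0)\neq 0$ is a strict local maximum when $y(s_0)>0$ and a strict local minimum when $y(s_0)<0$. The remaining case $y(s_0)=0$ places $(x(s_0),0)$ on $\Gamma_\varepsilon^\kappa\cap\{y=0\}$, that is, at an equilibrium of \eqref{1ordersys}; uniqueness of the Cauchy problem would trap $\gamma$ at this equilibrium, contradicting $\gamma(L)=(0,-1)$.

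Suppose, towards a contradiction, that $y$ is not monotonous, and let $s_1<s_k$ be its first and last interior critical points. Because $y$ is decreasing at both poles, the sign dichotomy above forces $s_1$ to be a local minimum with $y(s_1)<0$ and $s_k$ a local maximum with $y(s_k)>0$; in particular $(x(s_1),y(s_1))$ and $(x(s_k),y(s_k))$ both lie on $\Gamma_\varepsilon^\kappa$. The initial arc $\gamma_I=\gamma|_{[0,s_1]}$ and the final arc $\gamma_F=\gamma|_{[s_k,L]}$ have $y'<0$ in their interiors, so each lies in the open region $\{x>\Gamma_\varepsilon^\kappa(y)\}$ and is a graph $x=x_I(y)$, $x=x_F(y)$ of a strictly monotone $y$, with respective $y$-ranges $[y(s_1),1]$ and $[-1,y(s_k)]$.

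The decisive step, and the one where I expect the most careful bookkeeping, is the comparison of $x_I$ and $x_F$ on the overlap $[y(s_1),y(s_k)]$. At $y=y(s_1)$ the arc $\gamma_I$ meets $\Gamma_\varepsilon^\kappa$, so $x_I(y(s_1))=\Gamma_\varepsilon^\kappa(y(s_1))$, whereas $y(s_1)$ is an interior value of $\gamma_F$ and thus $x_F(y(s_1))>\Gamma_\varepsilon^\kappa(y(s_1))=x_I(y(s_1))$; symmetrically, $x_I(y(s_k))>\Gamma_\varepsilon^\kappa(y(s_k))=x_F(y(s_k))$. The intermediate value theorem then produces $y^*\in(y(s_1),y(s_k))$ with $x_I(y^*)=x_F(y^*)$, giving a regular point of $\Theta_\varepsilon^\kappa$ shared by two disjoint arcs of the same orbit, in contradiction with the local uniqueness for \eqref{1ordersys}. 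Therefore $y$ has no interior critical point and is monotonous on $[0,L]$.
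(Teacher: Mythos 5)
Your route is genuinely different from the paper's. The paper first shows, via Propositions \ref{condesferah2r} and \ref{condesferas2r}, that $\Gamma_1^\kappa$ is a compact arc joining $(0,1)$ to $(0,-1)$, so that $\Theta_1^\kappa$ splits into exactly four monotonicity regions, and then proves that the two orbits emanating from the poles stay in $\Lambda_1^\kappa\cup\Lambda_2^\kappa$ and must coincide; monotonicity of $\nu$ then falls out of the region structure. Your pointwise identity $y''(s_0)=-(1-y(s_0)^2)\,y(s_0)/\sin_\kappa^2 x(s_0)$ at critical points is correct (every term involving $\H$ or $\H'$ indeed carries a factor of $y'$), and the intermediate-value comparison of the initial and final arcs against $\Gamma_\varepsilon^\kappa$ is a clean substitute for the paper's endpoint-crossing argument between the two polar orbits.

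There is, however, a genuine gap: you implicitly assume that the whole profile curve of $S_\H$ is a single orbit of \eqref{1ordersys} with $\varepsilon=1$ running from $(0,1)$ to $(0,-1)$, i.e.\ that $z'>0$ on all of $(0,L)$. Since $z'$ vanishes exactly where $y=\pm1$, this amounts to assuming there is no interior point where the angle function equals $\pm1$ (a horizontal tangent plane off the axis). That is precisely the mechanism producing the nodoids of Section \ref{clasification}, where the profile curve switches between $\Theta_1^\kappa$ and $\Theta_{-1}^\kappa$, and excluding it is a substantive part of the theorem rather than a harmless normalization. Your machinery degenerates exactly there: if, say, $y(s_1)=-1$ with $0<s_1<L$, then $s_1$ is an interior critical point, your formula yields $y''(s_1)=0$ instead of a strict sign, and $(x(s_1),-1)$ does \emph{not} lie on $\Gamma_\varepsilon^\kappa$, which meets $\{y=-1\}$ only at $x=0$; hence the step $x_I(y(s_1))=\Gamma_\varepsilon^\kappa(y(s_1))$ fails. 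The gap is fixable with one more derivative: at such a point $x''(s_0)=0$ and $z''(s_0)=2x'(s_0)\H(x'(s_0))$, so $x'''(s_0)=-2\H(\pm1)\,z''(s_0)=\mp4\H(\pm1)^2\neq0$, which forces $|x'|>1$ on one side of $s_0$, a contradiction with $|x'|\le1$ (here $\H(\pm1)\neq0$ comes from Proposition \ref{m2rrmm1}). Alternatively, one can run the paper's comparison of the two polar graphs all the way to $y=\mp1$. Either way, this case must be treated explicitly before the rest of your argument applies.
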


\begin{proof}
Because $S_\H$ is a sphere, in particular it is compact. Thus, Proposition \ref{m2rrmm1} ensures us that both $\H(1)$ and $\H(-1)$ have the same sign, which can be supposed to be positive after a change of the orientation.\footnote{We drop here the case that $S_\H=\S^2\times\{t_0\}$ in the space $\s2r$, since the result holds trivially.}

Let $\alpha_\kappa(s)$ be the profile curve of $S_\H$ given by Equation \eqref{parametrot}. By Propositions \ref{condesferah2r} and \ref{condesferas2r}, and because $\H(1),\H(-1)$ are both positive, the curve $\Gamma^{\kappa}_1$ is a compact connected arc with endpoints $(0,1)$ and $(0,-1)$. Hence, in both $\Theta_1^\kappa$ we have four monotonicity regions $\Lambda_1^\kappa,\dots,\Lambda_4^\kappa$ with monotonicities given by Lemma \ref{monotonia} and an equilibrium $e_0^{1,\kappa}$; see Figure \ref{figestre}. 

\begin{figure}[h]
\begin{center}
\includegraphics[width=.55\textwidth]{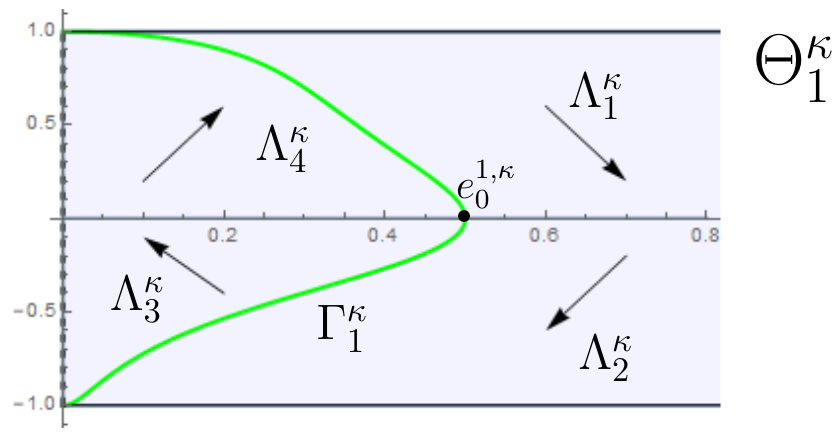}
\end{center}
\caption{The phase plane $\Theta_1^\kappa$, showing the monotonicity direction of each region $\Lambda_i^\kappa$.}
\label{figestre}
\end{figure}

By Corollary \ref{ejefase}, there exists an orbit $\gamma$ in $\Theta_1^\kappa$ that has $(0,1)$ as an endpoint. This orbit corresponds to an open subset of $S_\H$ that intersects the axis of rotation orthogonally with unit normal $\partial_z$. By the monotonicity properties, $\gamma$ stays in $\Lambda_1^\kappa$ for points near to $(0,1)$. Notice also that Proposition \ref{orbitapuntolimite} forbids the orbit $\gamma$ to have a point $(0,y)$ with $|y|<1$ as limit point. Indeed, in such an endpoint, the $\H$-sphere would be asymptotic to a vertical straight line, contradicting the compactness of $S_\H$, or would have a non-removable isolated singularity, which cannot happen because of the ellipticity of Equation \eqref{defHsup}. Also, since $\gamma$ cannot self-intersect (otherwise it would contradict the uniqueness of Cauchy problem), it is clear that $\gamma$ can behave in only two ways:

\begin{enumerate}
\item[i)] If $\gamma$ enters at some moment in the regions $\Lambda_3^\kappa$ or $\Lambda_4^\kappa$, then $\gamma$ has to converge asymptotically to the equilibrium $e_0^{1,\kappa}$ of $\Theta_1^\kappa$. But this implies that the profile curve $\alpha_\kappa(s)$ is asymptotic to a vertical straight line, i.e. $S_\H$ is asymptotic to a cylinder, contradicting the compactness of $S_\H$. Thus, this case is impossible.

\item[ii)] If $\gamma$ stays in $\Lambda_1^\kappa \cup \Lambda_2^\kappa$, then it is a graph of the form $x=g(y)>0$, with $y\in (y_0,1)$ for some $y_0\in [-1,1)$. By compactness of $S_\H$ we must have $y_0=-1$. Thus, $\gamma$ can be extended to a compact graph $x=g(y)\geq 0$ for $y\in [-1,1]$, and it has a second endpoint at some $(x_1,-1)$ with $x_1=g(-1)\geq 0$.
\end{enumerate}

Now we repeat the arguments above but starting at the point $(0,-1)$, obtaining an orbit $\sigma$ in $\Lambda_1^\kappa\cup \Lambda_2^\kappa\subset \Theta_1^\kappa$. We conclude that $\sigma$ can be extended to a graph $x=t(y)\geq 0$ for $y\in [-1,1]$, with a second endpoint at some $(x_2,1)$ with $x_2=t(1)\geq 0$. Since $\gamma$ and $\sigma$ cannot intersect on $\Theta_1^\kappa$, the only possibility is that $x_1=0$ or $x_2=0$. Thus, by the uniqueness property of Corollary \ref{ejefase}, we have $\gamma=\sigma$, which is then an orbit in $\Theta_1^\kappa$ joining $(0,1)$ with $(0,-1)$. Since, again by Corollary \ref{ejefase}, there are no orbits in $\Theta_{-1}^\kappa$ having any of such points as an endpoint, we conclude that $\gamma$ is the whole orbit that describes the profile curve $\alfa_\kappa(s)$. 

By Equation \eqref{1ordersys}, and since $\gamma$ stays in the region $\Lambda_1^\kappa\cup \Lambda_2^\kappa$, it follows that $y'(s)<0$ for all $s$. This implies that the angle function of $S_\H$, $\nu(\alpha_\kappa(s))=x'(s)$, is a monotonous function, completing the proof.
\end{proof}

\section{\large A Delaunay-type classification result}\label{clasification}
\vspace{-.5cm}

Given a positive constant $H_0$, a classical theorem due to Delaunay classifies, up to ambient isometries, the complete, rotational surfaces in $\r3$ with constant mean curvature $H_0$ as follows: the totally umbilical sphere $\S^2(1/H_0)$, the right circular cylinder $\S^1(1/(2H_0))\times\R$, a 1-parameter family of properly embedded \emph{unduloids}, and a 1-parameter family of properly immersed (non-embedded) \emph{nodoids}. Moreover, both the unduloids and the nodoids are invariant by the discrete group generated by some vertical translation in $\r3$.

This result has been generalized for CMC surfaces in further ambient spaces. Regarding the product spaces, we refer the reader to the papers \cite{HsHs,PeRi}. We must emphasize that in the space $\s2r$, Pedrosa and Ritoré also described the existence of a rotational, embedded torus of positive constant mean curvature.

Let us define the following set of functions:
\begin{equation}\label{espaciosc1h}
\mathfrak{C}^1_{\kappa}=\Big\{\H\in C^1([-1,1]);\ \H(y)=\H(-y), \forall y\in [-1,1],\ \mathrm{and}\ 2|\H(y)|>\sqrt{1-y^2},\ \mathrm{if}\ \kappa=-1\Big\}
\end{equation}
If $\kappa=1$, this set is just the set of $C^1$ even functions defined on $[-1,1]$. Note that every $\hipar$ lies in the hypothesis of either Proposition \ref{condesferah2r} or \ref{condesferas2r}, depending if $\kappa=-1$ or $\kappa=1$ respectively.

The aim of this section is to generalize Delaunay's theorem to the class of rotational $\Hss$, giving a similar description under the assumption that $\hipar$. We should point out that, in general, this classification result is no longer true for an arbitrary prescribed function $\H\in C^1([-1,1])$. For instance, if $2\H(y_0)=\sqrt{1-y_0^2}$ for some $y_0\in [-1,1]$ when $\kappa=-1$, an $\H$-sphere cannot exist by Proposition \ref{condesferah2r}. Also, for an arbitrary $\hi$ the statement in Proposition \ref{condesferas2r} in general does not hold, making impossible the existence of an $\H$-sphere in $\s2r$.

First we prove the existence of an $\H$-sphere, provided that $\hipar$. It is worth to mention that a more general existence result of immersed spheres in a simply connected, homogeneous three-manifold and whose mean, Gauss and extrinsic curvatures $H,K$ and $K_e$, respectively, satisfy a general Weingarten relation of the form $\Phi(H,K,K_e)=0$, has been recently obtained in \cite{GaMi2}. The improvement in this paper is that we present geometric necessary and sufficient conditions for the existence of prescribed mean curvature spheres.

\begin{teo}\label{existenciaesfera}
For each $\hipar$, there exists a rotational, embedded $\H$-sphere $S_\H$ in $\m2r$.
\end{teo}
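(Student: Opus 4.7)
The plan is to construct the sphere from the unique orbit in $\Theta_1^\kappa$ with $(0,1)$ as an endpoint. After replacing $\H$ by $-\H$ and using $\H(-1)=\H(1)$, I may assume $\H(\pm 1)\geq 0$; the degenerate case $\H(1)=0$ can only occur for $\kappa=1$ (since $\mathfrak{C}^1_{-1}$ forces $|\H(\pm 1)|>0$) and is already covered by taking $S_\H$ to be a horizontal plane $\S^2\times\{t_0\}$. Thus I focus on $\H(\pm 1)>0$, in which case Corollary \ref{ejefase} provides a unique orbit $\gamma\subset\Theta_1^\kappa$ with endpoint $(0,1)$, corresponding to a rotational $\H$-cap meeting its axis orthogonally. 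The goal is to show that $\gamma$ extends to reach $(0,-1)$.

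The central tool will be the reversibility of system \eqref{1ordersys}: since $\H$ is even, the transformation $(s,x,y)\mapsto(-s,x,-y)$ sends orbits to orbits. I will use this in two ways. First, if $\gamma$ meets $\{y=0\}$ transversally at a point $(x_0,0)$ at some finite time $s_0$, then by Cauchy uniqueness at the regular point $(x_0,0)$ the continuation of $\gamma$ past $s_0$ agrees with the reflected orbit $(x(2s_0-s),-y(2s_0-s))$, so that $\gamma(2s_0)=(0,-1)$. Second, the equilibrium $(e_0^{1,\kappa},0)$ has pure imaginary linearization eigenvalues --- the off-diagonal entry $-2\H'(0)$ vanishes by evenness of $\H$ --- and reversibility rules out spiralling behavior at such an equilibrium, forcing it to be a topological center; in particular, no orbit can converge to it.

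The main step, and the hardest one, is to prove that $\gamma$ does reach $\{y=0\}$ transversally at a non-equilibrium point in finite time. A Taylor expansion of $\gamma$ at $(0,1)$ places it initially in the component $\{x>\Gamma_1^\kappa(y)\}$ of $\Theta_1^\kappa\setminus\Gamma_1^\kappa$, where \eqref{1ordersys} yields $y'<0$. Moreover $\gamma$ cannot leave this component while $y>0$: writing $g(s)=\Gamma_1^\kappa(y(s))-x(s)$, with $g<0$ initially, one has $g'(s)=-y(s)<0$ at any point where $g(s)=0$ (since $y'=0$ on $\Gamma_1^\kappa$), which prevents $g$ from climbing back to zero from the negative side. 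Consequently $y(s)$ is strictly decreasing along $\gamma$. The two alternatives to reaching $\{y=0\}$ in finite time are then discarded: on the one hand, if $y\to y_\infty>0$, then in $\h2r$ the hypothesis $2|\H(y)|>\sqrt{1-y^2}$ forces $x\to\infty$ and $y'\to\sqrt{1-y_\infty^2}\bigl(\sqrt{1-y_\infty^2}-2\H(y_\infty)\bigr)<0$, contradicting $y\to y_\infty$, while in $\s2r$ the boundedness of $\Theta_1^1$ yields an analogous contradiction; on the other hand, convergence to the equilibrium has already been excluded above.

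Therefore $\gamma$ meets $\{y=0\}$ at a finite time $s_0$ at a non-equilibrium point $(x_0,0)$, and by reversibility extends to an orbit reaching $(0,-1)$ at time $2s_0$, with $y$ monotonically decreasing throughout. The corresponding profile curve $\alpha_\kappa(s)=(x(s),z(s))$ satisfies $x>0$ on the interior of $[0,2s_0]$ and has strictly monotone $z$-coordinate (since $z'=\sqrt{1-y^2}>0$ whenever $|y|<1$), so revolving $\alpha_\kappa$ around the vertical axis produces the required rotational, embedded $\H$-sphere $S_\H$.
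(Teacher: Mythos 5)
Your proof is correct and follows essentially the same route as the paper's: the cap from Corollary \ref{ejefase}, the monotone decrease of $y$ in the region $\{x>\Gamma_1^\kappa(y)\}$, the center structure at the equilibrium coming from $\H'(0)=0$ together with the reflection symmetry of the phase plane, and the reflection of the half-orbit across $\{y=0\}$ to close it up at $(0,-1)$. The only points where you are terser than the paper are the exclusion in $\s2r$ of the orbit escaping to $x=\pi$ with $y$ bounded below (here one should use that $\cot x\to-\infty$ forces $y$ to drop, not merely the boundedness of the strip $\Theta_1^1$) and the smoothness of the resulting surface at the second pole (which follows from the uniqueness statement of Corollary \ref{ejefase} applied at $(0,-1)$); both are routine.
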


\begin{proof}
The fact that $\H$ is even has the following consequence in the phase plane $\Theta_\varepsilon^\kappa$: if $(x(s),y(s))$ is a solution to \eqref{1ordersys}, then $(x(-s),-y(-s))$ is also a solution to \eqref{1ordersys}. Geometrically, this means that any orbit of the phase planes $\Theta^\kappa_{\varepsilon}$ is symmetric with respect to the axis $\{y=0\}$. 

If $\kappa=-1$, then after a change of the orientation we can suppose that $2\H(y)>\sqrt{1-y^2}$ holds, and in particular $\H(-1)=\H(1)$ are both positive. This implies that the curve $\Gamma_1^{-1}$, given by Equation \eqref{graga} for $\varepsilon=1$ and $\kappa=-1$, is a compact, connected arc in the phase plane $\Theta_1^{-1}$, with the points $(0,1)$ and $(0,-1)$ as endpoints, and it does not appear in the phase plane $\Theta_{-1}^{-1}$.

If $\kappa=1$ and we have that $\H(-1)=\H(1)=0$, then the surfaces $\S^2\times\{t_0\},\ t_0\in\R$ are rotational, embedded $\H$-spheres with either $\partial_z$ or $-\partial_z$ as unit normal, and the result holds trivially. Thus, if $\kappa=1$ we suppose that $\H(-1)=\H(1)\neq 0$. Again, after a change of the orientation we can suppose that both are positive. Because $\H$ is even, in particular the sum of the multiplicity of its zeros is even, and by Equation \ref{gragaenfunciondelaotra} we deduce that the curve $\Gamma_1^1$, given by Equation \eqref{graga} for $\varepsilon=1$ and $\kappa=1$,  is a compact, connected arc in the phase plane $\Theta_1^1$ with the points $(0,1)$ and $(0,-1)$ as endpoints.

These properties ensure us that the phase planes $\Theta_1^\kappa$, for $\kappa=\pm1$, are divided into four connected components, and an orbit behaves in each component as detailed in Lemma \ref{monotonia}; see Figure \ref{faseesfera}.

\begin{figure}[H]
\centering
\includegraphics[width=0.6\textwidth]{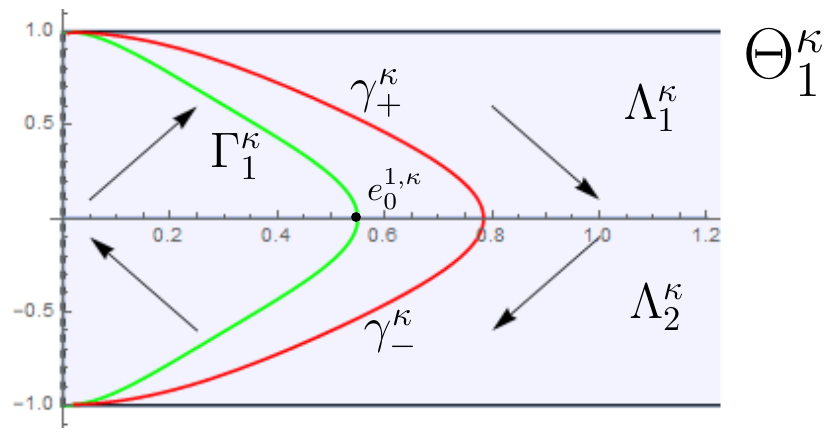}  
\caption{The phase plane $\Theta_1^\kappa$ for the function $\H(y)=1+y^2$, showing the equilibrium point $e_0^{1,\kappa}$, the monotonicity regions and their behaviors. The curve $\Gamma_1^\kappa$ is plotted in green, and the orbit $\gamma_+^\kappa\cup\gamma_-^\kappa$ corresponding to the $\H$-sphere plotted in red.}
\label{faseesfera}
\end{figure}

First, let $\sig^\kappa_+$ (resp. $\sig^\kappa_-$) be the $\H$-surface given by Lemma \ref{ejefase} intersecting orthogonally the axis of rotation and with unit normal equal to $\partial_z$ (resp. $-\partial_z$) at this intersection. We will denote by $\gamma^\kappa_+$ (resp. $\gamma^\kappa_-$) to the orbit in $\Theta_1^\kappa$ associated to $\sig^\kappa_+$ (resp. $\sig^\kappa_-$). Thus, $\gamma^\kappa_+$ is a curve in $\Theta^\kappa_+$ with $(0,1)$ as endpoint, that lies in $\Lambda^\kappa_1$ for points near to $(0,1)$. This also happens for $\gamma^\kappa_-$, which has $(0,-1)$ as endpoint and lies in $\Lambda^\kappa_2$ for points near to $(0,-1)$. By the symmetry condition and by uniqueness, if $\gamma^\kappa_+=(x(s),y(s))$ then $\gamma^\kappa_-=(x(-s),-y(-s))$. By the mean curvature comparison principle, the coordinate $x(s)$ of $\gamma_+^\kappa$ cannot diverge to infinity when the coordinate $y(s)$ approaches to some $y_0\geq 0$. Thus, $\gamma^\kappa_+$ has to converge to some finite point $(x_0,0),\ x_0>0,$ located at the axis $\{y=0\}$.

\textbf{Claim:} \emph{The point $(x_0,0)$  cannot be the equilibrium point $e_0^{1,\kappa}$.}

\emph{Proof of the claim:} Let us analyze the structure of the orbits of $\Theta^\kappa_1$ around $e_0^{1,\kappa}$. Because $\H$ is an even function, we have that $\H'(0)=0$. A straightforward computation yields that the linearized system at $e_0^{1,\kappa}$ associated to the nonlinear system \eqref{1ordersys} for $\varepsilon=1$ is
\begin{equation}\label{2ordersys}
\left(\begin{array}{c}
u\\
v
\end{array}\right)'=\left(\begin{array}{cc}
0 & 1\\
-\kappa-4\H(0)^2& 0
\end{array}\right) \left(\begin{array}{c}
u\\
v
\end{array}\right).
\end{equation}
The $a_{12}$ element of the linearized matrix $-\kappa-4\H(0)^2$ is always negative; for $\kappa=1$ is trivial, and for $\kappa=-1$ it follows from the hypothesis $2\H(y)>\sqrt{1-y^2}$ by just substituting at $y=0$. In this situation the orbits of Equation \eqref{2ordersys} are ellipses around the origin. By classical theory of nonlinear autonomous systems, this implies that there are two possible configurations for the space of orbits of \eqref{1ordersys} near $e_0^{1,\kappa}$; either all such orbits are closed curves (a \emph{center} structure), or they spiral around $e_0^{1,\kappa}$. However, this second possibility cannot happen, since all orbits of \eqref{1ordersys} are symmetric with respect to the axis $\{y=0\}$, and $e_0^{1,\kappa}$ belongs to this axis. In particular, we deduce that \emph{all orbits of $\Theta^\kappa_1$ stay at a positive distance from the equilibrium $e_0^{1,\kappa}$}. This proves the claim.

By properness, $\gamma^\kappa_+$ actually intersects the axis $\{y=0\}$ at some point $(x_0,0)$, $x_0>e_0^{1,\kappa}$, and can be expressed as a graph $\gamma_+^\kappa=(x,f(x))$, with $f(x)$ satisfying $f(0)=1,\ f(x_0)=0$ and $f'(x)<0$. By symmetry, the same holds for $\gamma^\kappa_-$ by just defining the function $(x,-f(x))$, see Figure \ref{faseesfera}.

By Equation \eqref{curvaprincipales} the principal curvatures of each $\sig^\kappa_{\pm}$ are positive everywhere. In particular $\sig^\kappa_+$ is a compact graph intersecting the axis of rotation and having the circumference $\S^1(x_0)\times\{a\}$, for some $a\in\R$, as boundary. In this boundary, its unit normal $\eta$ is horizontal and points inwards. By symmetry, $\sig^\kappa_-$ is just the graph $\sig^\kappa_+$ reflected with respect to a horizontal plane; the symmetry condition on $\H$ induces these reflections as isometries for the class of $\Hss$. In particular $\sig^\kappa_-$ is a compact graph which has as boundary the circumference $\S^1(x_0)\times\{b\}$, for some $b\in\R$, and the unit normal at this boundary is also horizontal and points inwards. 

After a vertical translation, both $\sig^\kappa_{\pm}$ are symmetric bi-graphs with respect some horizontal plane, and with their unit normals agreeing along their boundaries. By uniqueness of the Cauchy problem, we can smoothly glue together both $\H$-surfaces obtaining a compact $\H$-surface with genus 0 which is embedded, i.e. an embedded, rotationally symmetric $\H$-sphere $S_\H$. In Figure \ref{esfera} we can see an $\H$-sphere plotted in the space $\h2r$. Here, and henceforth, we use the Poincaré disk model of $\mathbb{H}^2$ when plotting $\H$-surfaces in $\h2r$.

In particular, the orbit $\gamma_0^\kappa:=\gamma_+^\kappa\cup\gamma_-^\kappa$ generated by $S_\H$ in $\Theta_1^\kappa,$ is a compact, symmetric arc with respect to the axis $\{y=0\}$, that lies entirely in $\Lambda_1^\kappa\cup\Lambda_2^\kappa$ and has $(0,\pm 1)$ as endpoints. This proves Theorem \ref{existenciaesfera}.
\end{proof}

\begin{figure}[H]
\centering
\includegraphics[width=0.5\textwidth]{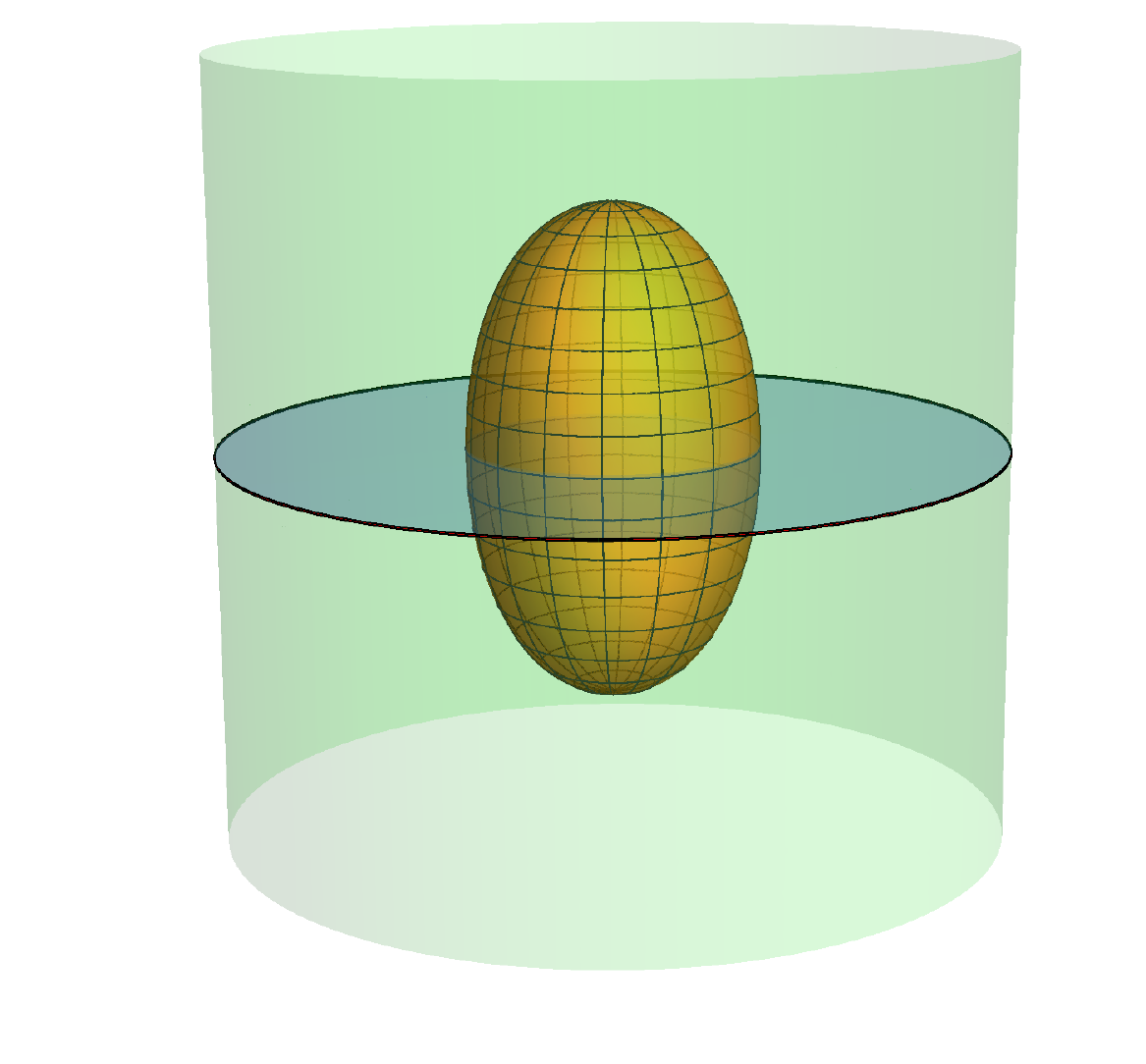}  
\caption{The rotational $\H$-sphere $S_\H$ in $\h2r$ for the function $\H(y)=1+y^2$. Note that $S_\H$ is a symmetric bi-graph over the horizontal plane $\mathbb{H}^2\times\{0\}$.}
\label{esfera}
\end{figure}

\begin{obs}
Let be $\hipar$ and $S_\H$ the corresponding rotational $\H$-sphere. In Theorem \ref{teoes} we proved that the angle function of $S_\H$ is strictly monotonous, and thus we can invoke Gálvez and Mira uniqueness Theorem to ensure that $S_\H$ is the only immersion of an $\H$-sphere in $\mkr$ (up to translations).
\end{obs}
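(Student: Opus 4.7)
The plan is to verify that the hypotheses of the Gálvez–Mira Hopf-type uniqueness theorem \cite{GaMi1} are satisfied by the rotational sphere $S_\H$, and then to read off its conclusion in the $\m2r$ setting using the classification of $\H$-preserving ambient isometries given in Lemma \ref{sime}.

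First, I would recall from Section \ref{sec:2} that equation \eqref{defHsup} is a second order quasilinear elliptic equation wherever a piece of the surface is locally written as a vertical (or horizontal) graph. Hence any immersed $\H$-sphere in $\m2r$ falls within the class of immersed spheres governed by an elliptic PDE on an oriented three-manifold for which \cite{GaMi1} provides uniqueness, provided one has a distinguished reference $\H$-sphere satisfying the Hopf-type nondegeneracy hypothesis of that theorem.

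Next, I would exhibit $S_\H$ itself as the required reference sphere. By Theorem \ref{teoes}, the angle function $\nu$ of $S_\H$ is strictly monotonic along its profile curve, and from the proof of Theorem \ref{existenciaesfera} the generating orbit $\gamma_0^\kappa=\gamma_+^\kappa\cup\gamma_-^\kappa$ lies entirely in $\Lambda_1^\kappa\cup\Lambda_2^\kappa$, so $y'$ keeps a constant sign and, via \eqref{curvaprincipales}, both principal curvatures of $S_\H$ have definite sign. This strict monotonicity of $\nu$ is precisely the nondegeneracy needed to invoke \cite{GaMi1} in the product ambient (the role played by strict convexity in $\r3$), so the theorem applies with $S_\H$ as the distinguished sphere.

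Finally, the Gálvez–Mira theorem then yields that any other immersed $\H$-sphere $\Sigma\subset\m2r$ is the image of $S_\H$ under an ambient isometry that preserves equation \eqref{defHsup}. By Lemma \ref{sime} together with the discussion right after it — which uses that $\H$ is even, since $\hipar$ — every such isometry has the form $\Phi_{\M^2}\times T_\lambda$, possibly composed with a horizontal reflection $\cR_a$. Because $S_\H$ is rotational and a symmetric vertical bi-graph over some horizontal plane, the horizontal factor $\Phi_{\M^2}$ merely relocates its rotation axis and $\cR_a$ yields a vertical translate; the only genuinely new freedom is the vertical translation $T_\lambda$. This gives uniqueness up to translations of $\m2r$, as claimed. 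The main subtlety will be to identify, in the statement of \cite{GaMi1}, the precise nondegeneracy hypothesis that matches the strict monotonicity of $\nu$ from Theorem \ref{teoes} rather than the Euclidean strict convexity recalled in the introduction; once this match is made, the remainder is a direct invocation.
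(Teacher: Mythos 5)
Your proposal follows essentially the same route as the paper: the observation is justified there simply by combining the strict monotonicity of the angle function from Theorem \ref{teoes} with the Gálvez--Mira uniqueness theorem of \cite{GaMi1}, which is exactly what you do. Your additional discussion of which ambient isometries preserve the class of $\Hss$ (via Lemma \ref{sime}) is a harmless elaboration of the phrase ``up to translations'' and does not change the argument.
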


Now we announce the Delaunay-type classification result for $\Hss$ in $\m2r$:

\begin{teo}\label{delaunay}
Let be $\hipar$. Then, up to isometries, the complete, rotational $\Hs$ in $\m2r$ are classified as follows:
\begin{enumerate}
\item There exists an $\H$-sphere $S_\H$.

\item There exists a vertical cylinder $C_\H$ of constant mean curvature $\H(0)$.

\item There exists a one parameter family of properly embedded $\H$-surfaces, $U_\H$, invariant by a vertical translation and the topology of an annulus, called $\H$-unduloids.

\item There exists a one parameter family of properly immersed $\H$-surfaces, $N_\H$, invariant by a vertical translation and the topology of an annulus, called $\H$-nodoids. 

\item In the space $\s2r$ there exist and embedded $\H$-surface diffeomorphic to $\S^1\times\S^1$, i.e. an embedded $\H$-torus.
\end{enumerate}
Moreover, both the $\H$-unduloids and the $\H$-nodoids are invariant by the discrete group generated by some vertical translation in $\m2r$.
\end{teo}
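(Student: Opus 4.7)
The plan is to use the phase plane analysis of Section~\ref{analisisfases} to classify all maximal orbits of \eqref{1ordersys} in the combined phase space $\Theta^\kappa:=\Theta_1^\kappa\cup\Theta_{-1}^\kappa$, glued along $\{y=\pm 1\}$ where $\varepsilon=\mathrm{sign}(z')$ flips. Two of the five claimed surfaces come for free: the equilibrium $e_0^{1,\kappa}$ yields the vertical cylinder $C_\H$ of item~2, and the sphere orbit $\gamma_0^\kappa$ constructed in Theorem~\ref{existenciaesfera} yields the $\H$-sphere of item~1. Since $\hipar$, $\H$ is even, so $\Theta_1^\kappa$ is invariant under $(x,y)\mapsto(x,-y)$; together with the center-type linearization at $e_0^{1,\kappa}$ shown inside the proof of Theorem~\ref{existenciaesfera}, a punctured neighborhood of the equilibrium is foliated by closed orbits symmetric about $\{y=0\}$.

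First I would extend this local family of closed orbits to fill the entire bounded region $R\subset\Theta_1^\kappa$ enclosed by $\gamma_0^\kappa$ together with the segment $\{0\}\times[-1,1]$. Given $(x_1,0)\in R$ with $x_1\neq e_0^{1,\kappa}$, the orbit through this point cannot cross $\gamma_0^\kappa$ (uniqueness of the Cauchy problem), cannot limit onto any $(0,y_\ast)$ with $|y_\ast|<1$ (Proposition~\ref{orbitapuntolimite}), and by Lemma~\ref{monotonia} together with the $y\mapsto-y$ symmetry it must return to $\{y=0\}$ at the mirror point $(x_1',0)$ and close up. Along any such closed orbit, $y(s)\in(-1,1)$ strictly, so $z'(s)=\sqrt{1-y^2}$ has constant positive sign and the profile curve $\alpha_\kappa=(x(s),z(s))$ is a vertical graph; the rotated surface is therefore a properly embedded annulus invariant by a vertical translation, giving the $\H$-unduloid of item~3.

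For item~4 I take $(x_2,0)$ with $x_2>x_0$, where $x_0$ is the rightmost $x$-coordinate of $\gamma_0^\kappa$. By Lemma~\ref{monotonia} the orbit through $(x_2,0)$ cannot return to $\{y=0\}$ inside $\Theta_1^\kappa$ without crossing $\gamma_0^\kappa$, so it must reach $\{y=1\}$ at some interior point; there $\varepsilon$ flips, the orbit enters $\Theta_{-1}^\kappa$, and by the $y\leftrightarrow -y$ symmetry (plus $x\leftrightarrow\pi-x$ in $\s2r$, cf.\ \eqref{gragaenfunciondelaotra}) it closes up in $\Theta^\kappa$ after one revolution. The associated profile curve has horizontal tangents at each crossing of $\{y=\pm 1\}$, producing loops, so the rotated surface is properly immersed with self-intersections: the $\H$-nodoid. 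For item~5, the key observation is that in $\s2r$ the phase space $\Theta^1$ is a \emph{compact} cylinder and certain nodoid orbits encircle both equilibria $e_0^{\pm 1,1}$; on this sub-family the vertical period $T(\gamma):=\oint\varepsilon\sqrt{1-y^2}\,ds$ is a continuous function, and it vanishes on the orbit symmetric under $x\mapsto\pi-x$ because the $\Theta_1^1$ and $\Theta_{-1}^1$ contributions cancel in sign. The profile curve of that distinguished orbit closes into an embedded loop, producing the $\H$-torus of item~5. The discrete vertical-translation invariance in items~3 and~4 then follows automatically from the arc-length periodicity of the orbit.

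The main obstacle I anticipate is the absence of a first integral for \eqref{1ordersys}, explicitly emphasized in the paper: in the classical CMC setting the closed orbits arise as level sets of a conserved quantity, so one can read off the Delaunay classification directly from the phase portrait. In the present setting one must instead combine the center-type linearization, Poincaré--Bendixson reasoning and the $y\mapsto-y$ symmetry to rule out limit cycles and to conclude that $R$ is genuinely foliated by closed orbits all the way up to the sphere separatrix (with an analogous argument in the outer region supplying the nodoids); and the existence of the $\H$-torus in $\s2r$ requires carefully running the symmetry-driven continuity argument above, which is the most delicate step.
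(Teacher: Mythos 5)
Your proposal follows essentially the same route as the paper's proof: the phase-plane dichotomy between the inner region $\cW_0^\kappa$ bounded by the sphere orbit (all orbits closed by the $y\mapsto -y$ symmetry together with Proposition \ref{orbitapuntolimite}, hence unduloids) and the outer region combined with $\Theta_{-1}^\kappa$ (horizontal-graph orbits glued along $\{y=\pm 1\}$, producing nodoids, with the torus in $\s2r$ coming from the distinguished orbit symmetric under $x\mapsto\pi-x$). Your vertical-period/continuity phrasing of the torus step reduces to the paper's direct identification of the orbit with endpoint $x_1=\pi/2$, and the only detail you leave implicit is the mean-curvature-comparison/maximum-principle argument the paper uses to guarantee that outer orbits in $\h2r$ reach $\{y=\pm 1\}$ at finite $x$ rather than escaping to infinity.
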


\begin{proof}
The existence of a rotational $\H$-sphere $S_\H$ was already proved in Theorem \ref{existenciaesfera}. In particular, we deduced the phase planes $\Theta_1^\kappa$ are divided into four monotonicity regions $\Lambda_i^\kappa,\ i=1,...,4,$ and that the orbit $\gamma_0^\kappa$ generated by $S_\H$ in $\Theta_1^\kappa$ is a compact arc, having $(0,\pm 1)$ as endpoints, symmetric with respect to the axis $\{y=0\}$ and that lies entirely in $\Lambda_1^\kappa\cup\Lambda_2^\kappa$; see Figure \ref{fasesk1}.

\begin{figure}[h]
\begin{center}
\includegraphics[width=.6\textwidth]{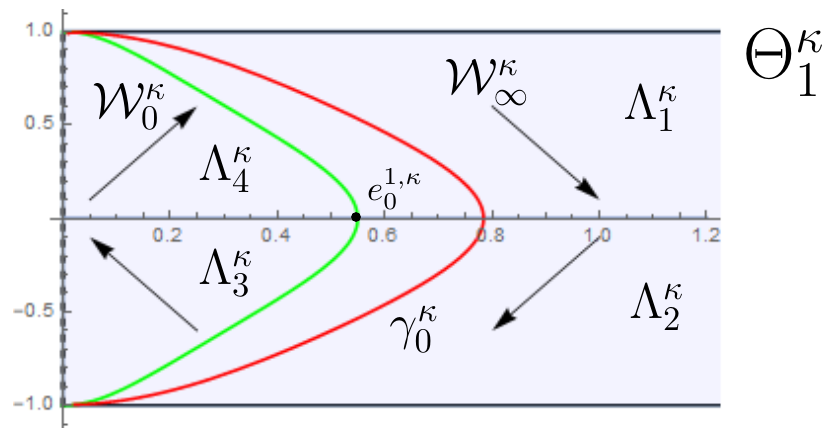}
\end{center}
\caption{The phase plane $\Theta_1^\kappa$ and the orbit $\gamma_0^\kappa$ corresponding to the $\H$-sphere.}
\label{fasesk1}
\end{figure}

Observe that there exists an equilibrium $e_0^{1,\kappa}\in \Theta^\kappa_1$, given by \eqref{equilibrium} if $\kappa=-1$ or by \eqref{equis2r} if $\kappa=1$, which generates a vertical cylinder with constant mean curvature equal to $\H(0)$. This proves Item $2$.

The orbit $\gamma_0^\kappa$ divides $\Theta_1^\kappa$ into two connected components: one containing the equilibrium $e_0^{1,\kappa}$, which we will denote by $\cW^\kappa_0$, and other denoted by $\cW^\kappa_\infty$; see again Figure \ref{fasesk1}.

If $\kappa=-1$, then $\cW^{-1}_\infty$ is unbounded; if $\kappa=1$, then $\cW^1_\infty$ is bounded, and the vertical segment $\{\pi\}\times(-1,1)$ belongs to its boundary (recall that this segment corresponds to the antipodal axis of rotation). Note that the uniqueness of the solution of the Cauchy problem guarantees that any orbit of $\Theta_1^\kappa$ lies entirely in one of these open sets.

Name $x_0^\kappa$ to the intersection of $\gamma_0^\kappa$ with the axis $\{y=0\}$, fix some $\xi^\kappa>x_0^\kappa$ and denote by $\gamma_1^\kappa$ to the orbit in $\Theta_1^\kappa$ passing through $\xi^\kappa$. By uniqueness of the Cauchy problem, it is clear that $\gamma^\kappa_1$ lies entirely in $\Lambda_1^\kappa\cup\Lambda_2^\kappa$. By properness, symmetry and monotonicity, $\gamma^\kappa_1$ can be expressed as a horizontal graph $x=g^\kappa(y)$ such that $g^\kappa$ is strictly increasing (resp. decreasing) in $(-1,0]$ (resp. in $[0,1)$), with $g^\kappa(0)=\xi^\kappa$ and $g^\kappa(\pm 1)=x_1^\kappa>0$, i.e. $\gamma^\kappa_1$ has the points $(x_1^\kappa,\pm 1)$ as endpoints, see Figure \ref{fasesnodoide}, left.

\begin{figure}[H]
\centering
\includegraphics[width=1\textwidth]{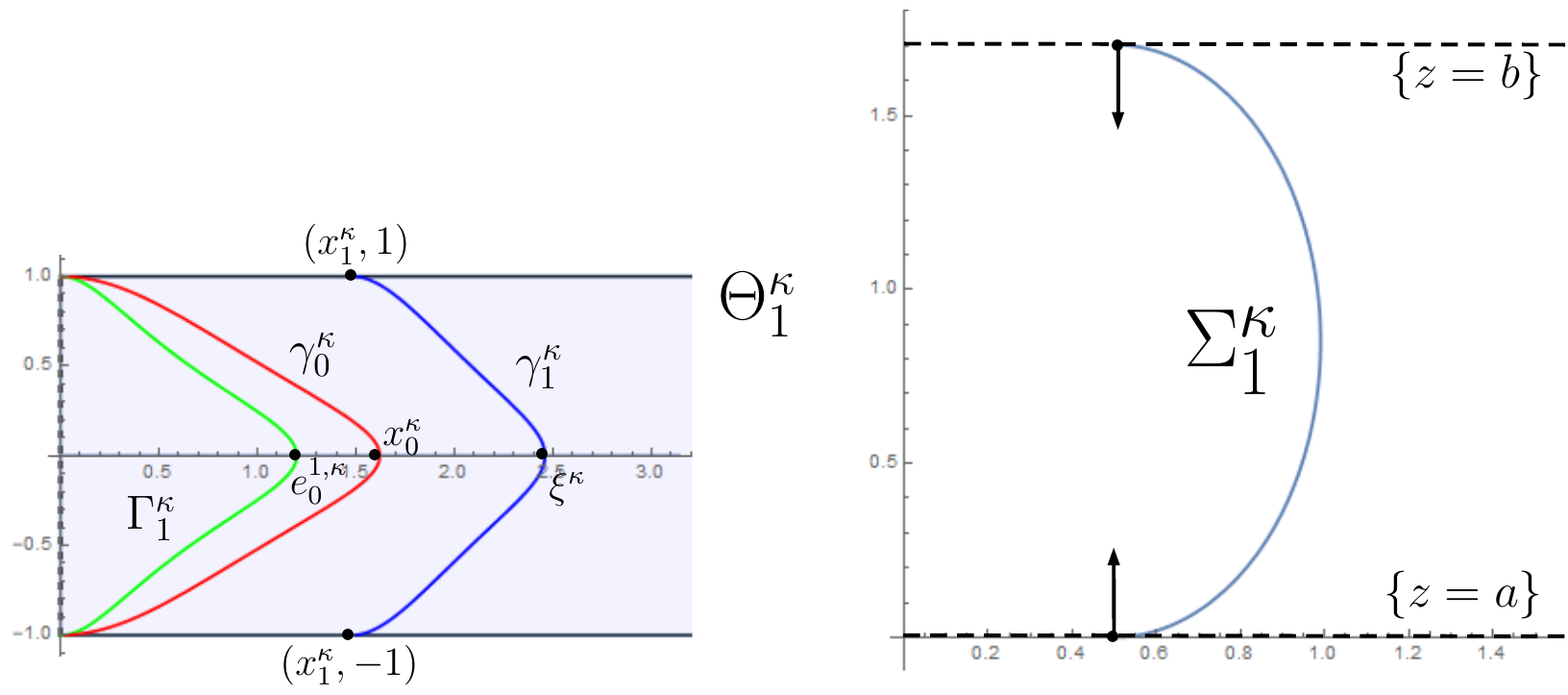}  
\caption{Left: the configuration of the phase plane $\Theta^\kappa_1$. Right: the profile curve corresponding to the orbit plotted in blue, which generates an $\Hs$ $\sig_1^\kappa$.}
\label{fasesnodoide}
\end{figure}

Let $\Sigma^\kappa_1$ denote the rotational $\H$-surface in $\m2r$ associated to any such orbit in $\cW^\kappa_{\infty}$, and let $\alfa_\kappa(s)=(x(s),z(s))$ be its profile curve. Note that $z'(s)>0$ since $\varepsilon=1$. Then, $\sig_1^\kappa$ is a compact, symmetric bi-graph over the domain $\Omega=\{x\in \M^2: x_1^\kappa\leq |x|\leq \xi^\kappa\}$, and its boundary is given by 
\begin{equation}
\parc \Sigma^\kappa_1 =( \S^1 (x_1^\kappa) \times \{a\} )\cup (\S^1 (x_1^\kappa) \times \{b\}),
\end{equation}
for some $a<b$. The $z(s)$-coordinate of the profile curve $\alfa_\kappa(s)$ of $\Sigma^\kappa_1$ is strictly increasing, and the unit normal to $\Sigma^\kappa_1$ along $\partial \Sigma^\kappa_1\cap \{z=a\}$ (resp. along $\parc \Sigma^\kappa_1\cap \{z=b\}$) is constant, and equal to $\partial_z$ (resp. to $-\partial_z$); see Figure \ref{fasesnodoide}, right.

Now we focus in the case $\varepsilon=-1$ and analyze the behavior of the orbits in the phase planes $\Theta_{-1}^\kappa$.

First, suppose that $\kappa=1$. Recall that by the symmetry of the phase planes w.r.t. the segment $\{x=\pi/2\}$, if $(x,y)$ denote the coordinates of the phase plane $\Theta_1^1$ then $\overline{(x,y)}:=(\pi-x,y)$ are the coordinates of the phase plane $\Theta_{-1,}^1$. In particular,  the curve $\Gamma^1_{-1}$ also exists in $\Theta_{-1}^1$, as well as the equilibrium $e_0^{-1,1}$, see Equations \eqref{gragaenfunciondelaotra} and \eqref{equis2r}. Bearing this in mind, if $\Lambda_1^1,...,\Lambda_4^1$ are the monotonicity regions of $\Theta_1^1$, then $\overline{\Lambda}_i^1=(\pi,0)-\Lambda_i,\ i=1,...,4$, are the monotonicity regions in $\Theta_{-1}^1$; see Figure \ref{fasesnodoide2}, right. Thus, the study of the phase plane $\Theta_{-1}^1$ reduces to the study of the phase plane $\Theta_1^1$.

\begin{figure}[H]
\centering
\includegraphics[width=1\textwidth]{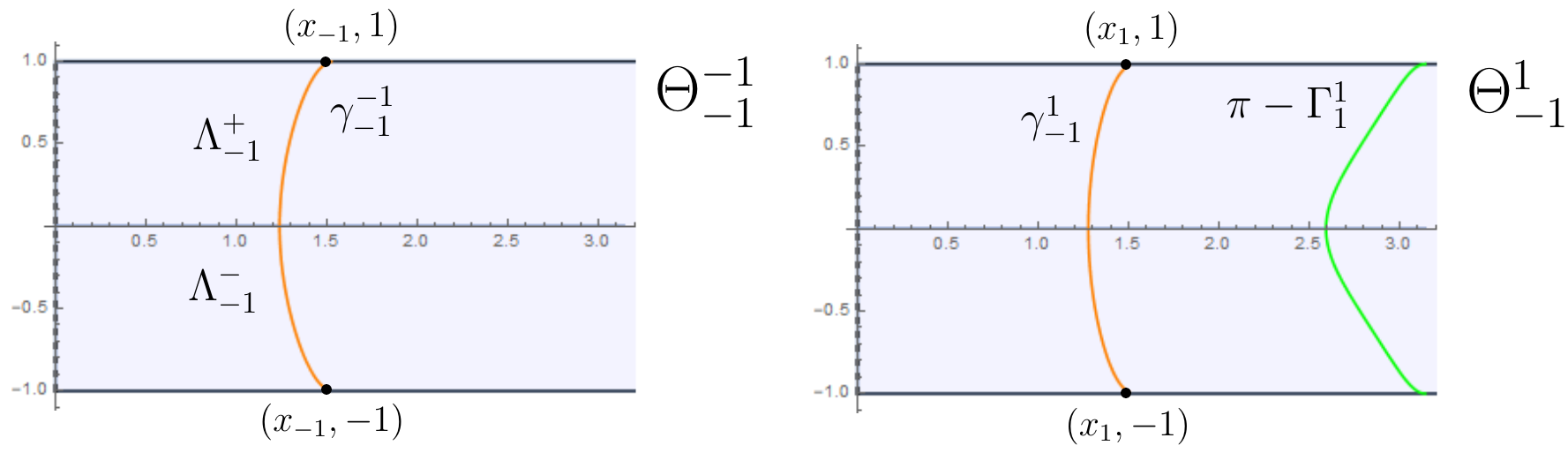}  
\caption{The orbits in the phase planes $\Theta_{-1}^\kappa,\ \kappa=\pm 1$.}
\label{fasesnodoide2}
\end{figure}

Suppose now that $\kappa=-1$. In this situation, the curve $\Gamma^{-1}_{-1}$ in $\Theta_{-1}^{-1}$ does not exist, and so $\Theta^{-1}_{-1}$ has only two monotonicity regions: $\Lambda_{-1}^+= \Theta^{-1}_{-1}\cap \{y>0\}$ and $\Lambda_{-1}^-=\Theta^{-1}_{-1}\cap \{y<0\}$, see Figure \ref{fasesnodoide2}, left. The description of the orbits in $\Theta_{-1}^{-1}$ follows easily from the monotonicity properties as explained in Lemma \ref{monotonia}. Any such orbit is given by a horizontal $C^1$ graph $x=g(y)$, with $g(y)=g(-y)>0$ for every $y\in (-1,1)$, and such that $g$ restricted to $[0,1)$ is strictly increasing. Note that the graph $g(y)$ cannot tend to $\infty$ as $y\to \pm 1$. On the contrary, the rotational $\H$-surface in $\h2r$ described by that orbit would be a symmetric bi-graph over the exterior of an open ball in $\mathbb{H}^2$. This is impossible by the maximum principle, since we would be able to compare with the $\H$-sphere $S_\H$. Thus, any orbit in $\Theta_{-1}^{-1}$ has as endpoints the points $(x_{-1},\pm1)$, where $x_{-1}=g(1)=g(-1)>0$.

Once we have analyzed both phase planes for $\varepsilon=-1$, we consider the orbit  $\gamma^\kappa_{-1}$ in $\Theta_{-1}^\kappa$ having as endpoints $(x_\kappa,\pm 1)$, and intersecting the axis $\{y=0\}$ at some $r_\kappa$, see Figure \ref{fasesnodoide2}. By similar arguments to the ones developed for $\Theta_1^\kappa$, we conclude that the generated $\Hs$ $\Sigma^\kappa_{-1}$ is a compact, symmetric bi-graph in $\m2r$ over some domain in $\M^2$ of the form $\{x\in \M^2: r_\kappa\leq |x|\leq x_\kappa \}$, and  
 \begin{equation}
\parc\Sigma_{-1}^\kappa=( \S^1(x_\kappa) \times \{c\} )\cup (\S^1(x_\kappa) \times \{d\}),
\end{equation}
for some $c<d$. This time, the unit normal of $\Sigma^\kappa_{-1}$ along $\parc \Sigma^\kappa_{-1}\cap \{z=c\}$ (resp. along $\parc \Sigma^\kappa_{-1}\cap \{z=d\}$) is $-\partial_z$ (resp. $\partial_z$). 

Consequently, by uniqueness of the solution to the Cauchy problem for $\H$-graphs in $\m2r$, we can deduce that given any $\xi^\kappa>x_0^\kappa>0$, the $\H$-surfaces $\Sigma^\kappa_{-1}$ and $\Sigma^\kappa_1$ as constructed above can be smoothly glued together along any of their boundary components where their unit normals agree, to form a larger $\H$-surface. For this, we should note that both $\Sigma^\kappa_{-1}$ and $\Sigma^\kappa_1$ are defined up to vertical translations in $\m2r$, and so we can assume without loss of generality in the previous construction that $a=d$ or that $b=c$ (and hence $\sig_1^\kappa$ and $\sig_{-1}^\kappa$ have the same Cauchy data). At this point, two possibilities may happen:

\begin{itemize}
\item[1.] We have simultaneously $a=d$ and $b=c$. In this situation, the surface obtained by gluing $\sig_1^\kappa$ with $\sig_{-1}^\kappa$ is an embedded $\Hs$ diffeomorphic to $\S^1\times\S^1$, that is an embedded $\H$-torus. If $\kappa=-1$, i.e. in the space $\h2r$, this is impossible in virtue of Proposition \ref{alexandrov}; see Figure \ref{nodoidetoro}, right.

\begin{figure}[H]
\centering
\includegraphics[width=1\textwidth]{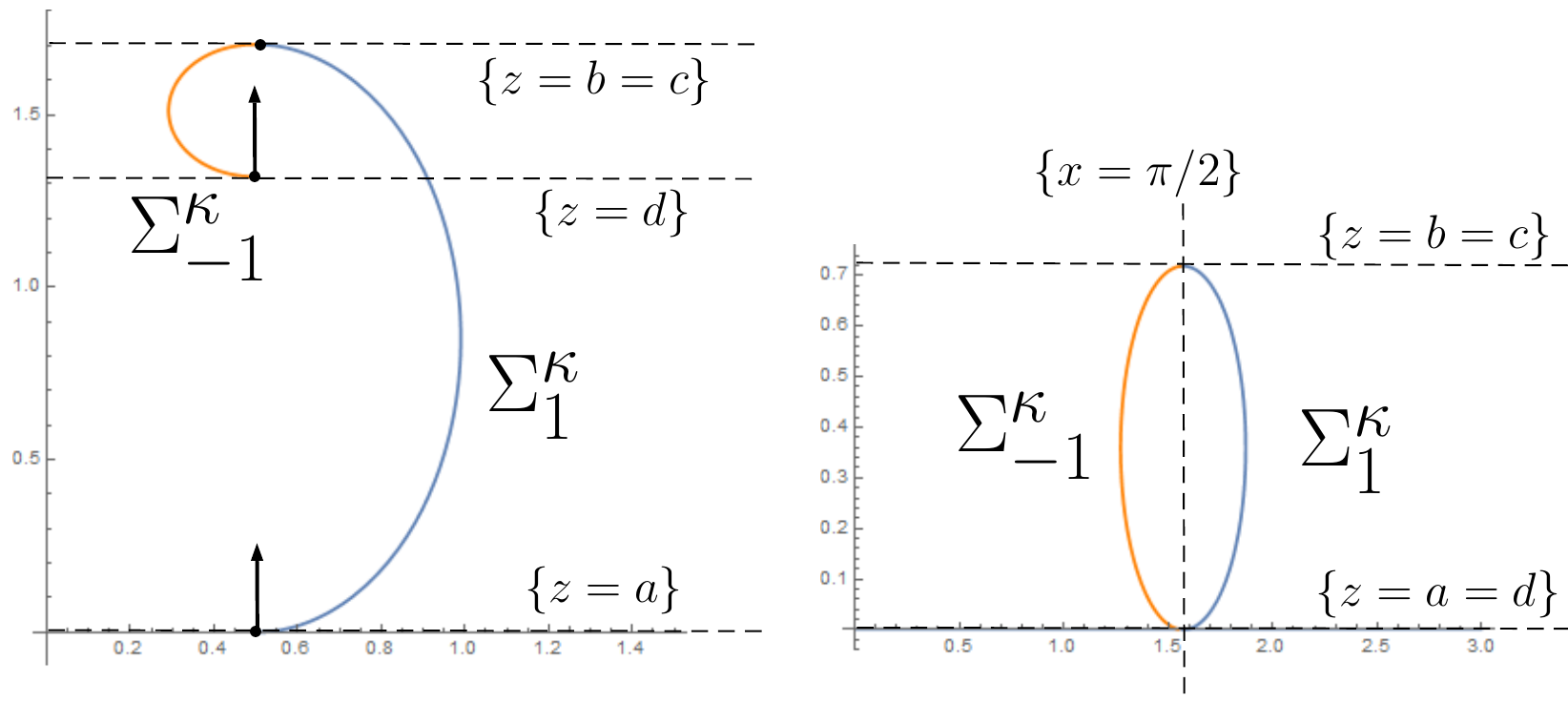}  
\caption{Left: the case $a\neq d$ and $b=c$, which generates an immersed nodoid in both $\m2r$. Right: the case $a=d$ and $b=c$, which generates an embedded torus in $\s2r$. Both profile curves have been obtained after a stereographic projection from $\M^2$ onto $\R^2$.}
\label{nodoidetoro}
\end{figure}

However, if $\kappa=1$ we know that the phase planes $\Theta_\varepsilon^1$ are symmetric w.r.t. the segment $\{x=\pi/2\}$, hence their orbits and their corresponding profile curves. By symmetry of the phase planes and uniqueness, we have $a=d$ and $b=c$ if and only if $x_1=\pi/2$. In this case, the orbit $\gamma_1^1$ in $\Theta_1^1$ passing through the points $(\pi/2,\pm 1)$ and the orbit $\gamma_{-1}^1$ in $\Theta_{-1}^1$ passing through the points $(\pi/2,\pm 1)$ are symmetric w.r.t. the segment $\{x=\pi/2\}$. This implies that the profile curves associated to $\gamma_1^1$ and $\gamma_{-1}^1$ are symmetric w.r.t. the plane $\{(x,y,0)\}\times\R,\ (x,y,0)\in\S^2$, see Figure \ref{nodoidetoro}, right, and thus the gluing of $\sig_1^1$ and $\sig_{-1}^1$ generates a rotational, embedded $\H$-torus in the space $\s2r$.

\item[2.] We have $a=d$ and $b\neq c$, or $a\neq d$ and $b=c$, see Figure \ref{nodoidetoro}, left. In that way, we iterate the previous process and obtain a proper, non-embedded rotational $\Hs$ diffeomorphic to $\S^1\times\R$ and invariant by some vertical translation, proving the existence of the $\H$-nodoids, see Figure \ref{nodoide}.
\end{itemize}

\begin{figure}[H]
\centering
\includegraphics[width=0.6\textwidth]{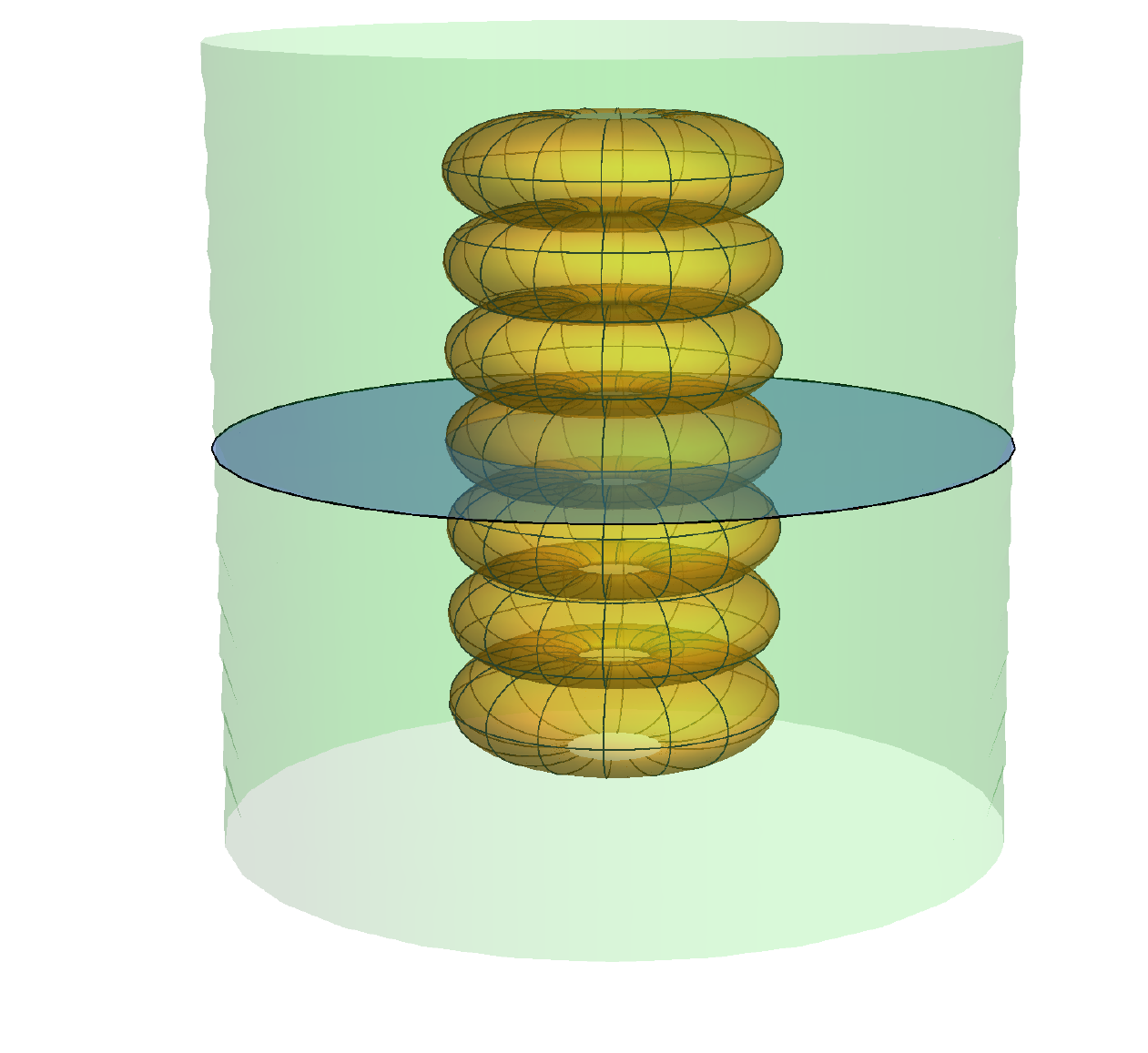}  
\caption{An $\H$-nodoid in $\h2r$ for the function $\H(y)=1+y^2$.}
\label{nodoide}
\end{figure}

Now, we consider an orbit $\gamma^\kappa$ of $\Theta^\kappa_1$ that is contained in the region $\cW^\kappa_0$. Recall that we pointed out in the proof of Theorem \ref{existenciaesfera} that every orbit stays at a positive distance from the equilibrium $e^{1,\kappa}_0$, and so $\gamma^\kappa$ does. As $\gamma^\kappa$ is symmetric with respect to the $\{y=0\}$ axis and taking into account the monotonocity properties of $\Theta^\kappa_1$, we see that only two possibilities can happen for $\gamma^\kappa$:

\begin{enumerate}
\item[1.] $\gamma^\kappa$ is a closed curve containing $e^{1,\kappa}_0$ in its inner region, or
\item[2.]  $\gamma^\kappa$ is a proper arc in $\Theta^\kappa_1$ with two limit endpoints of the form $(0,y_1)$, $(0,y_2)$, with $-1< y_1\leq 0\leq y_2<1$.
\end{enumerate}
However, according to Proposition \ref{orbitapuntolimite} no orbit can have a limit point of the form $(0,y)$ with $|y|<1$. Consequently, we deduce that any orbit $\gamma^\kappa$ inside $\cW^\kappa_0$ is a closed curve that contains $e^{1,\kappa}_0$ inside its inner region, see Figure \ref{onduloidefases}.

\begin{figure}[H]
\centering
\includegraphics[width=0.6\textwidth]{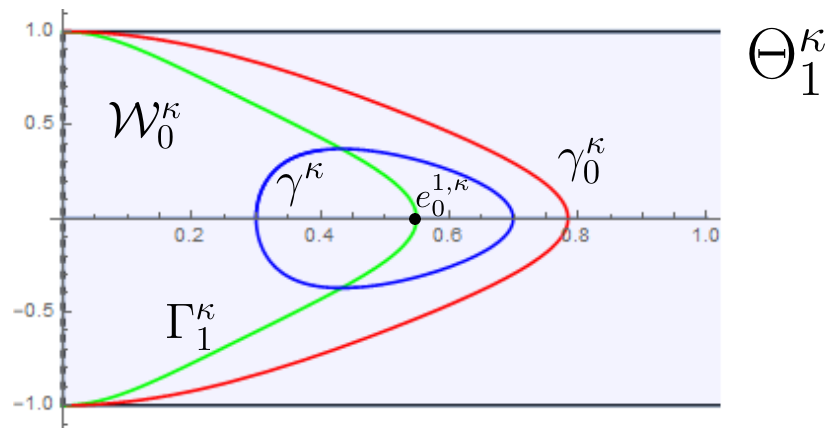}  
\caption{The phase plane $\Theta_1^\kappa$ and an orbit corresponding to an $\H$-unduloid.}
\label{onduloidefases}
\end{figure}

This implies that the profile curve $\alfa_\kappa(s)$ of the rotational $\H$-surface $\sig^\kappa$ associated to any such orbit satisfies that $z'(s)>0$ for all $s$ and that $x(s)$ is periodic. These properties imply that $\Sigma^\kappa$ is an $\H$-unduloid, with all the properties asserted in the statement of the theorem (see Figure \ref{onduloide}). 

This concludes the proof of Theorem \ref{delaunay}.
\end{proof}

\begin{figure}[H]
\centering
\includegraphics[width=0.5\textwidth]{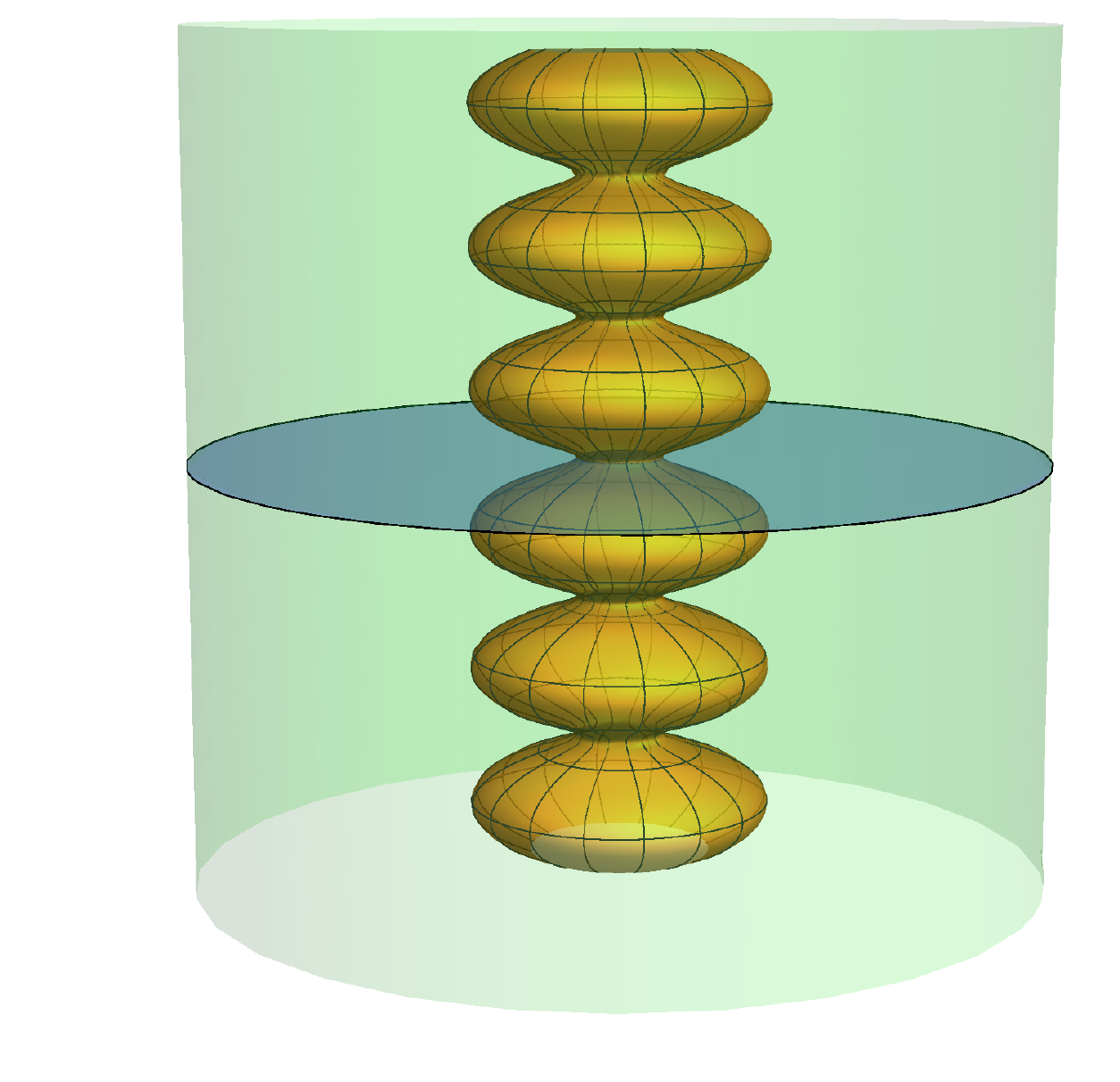}  
\caption{An $\H$-unduloid in $\h2r$ for the function $\H(y)=1+y^2$.}
\label{onduloide}
\end{figure}

\begin{remark}
Similarly to what happens in the CMC case and for $\H$-hypersurfaces in $\R^n$ \cite{BGM2}, the family of $\H$-unduloids is a continuous $1$-parameter family; at one extreme of the parameter, they converge to a (singular) vertical chain of tangent rotational $\H$-spheres $S_{\H}$; at the other extreme they converge to the CMC cylinder $C_{\H}$.
\end{remark}

\def\refname{References}

\end{document}